\numberwithin{equation}{section}
\renewcommand{\Im}{\mathrm{Im}}
\renewcommand{\Re}{\mathrm{Re}}
\newcommand{\pd}[2]{\frac {\partial #1}{\partial #2}}
\newcommand{\al}{\alpha}
\newcommand{\bb}{\beta}
\newcommand{\la}{\lambda}
\newcommand{\La}{\Lambda}
\newcommand{\oo}{\omega}
\newcommand{\Na}{\nabla}
\newcommand{\ee}{\epsilon}
\newcommand{\te}{\theta}
\newcommand{\beq}{\begin{equation}}
\newcommand{\eeq}{\end{equation}}
\newcommand{\beqs}{\begin{eqnarray*}}
\newcommand{\eeqs}{\end{eqnarray*}}
\newcommand{\beqn}{\begin{eqnarray}}
\newcommand{\eeqn}{\end{eqnarray}}
\newcommand{\beqa}{\begin{array}}
\newcommand{\eeqa}{\end{array}}
\def\td{\tilde}
\def\p{\partial}
\def\RR{{\mathbb R}}
\def\pbp{\frac {\sqrt{-1}}2 \partial\bar\partial}
\def\cH{{\mathcal H}}
\def\Aut{{\rm Aut}}
\newtheorem{prop}{Proposition}[section]
\newtheorem{theo}[prop]{Theorem}
\newtheorem{lem}[prop]{Lemma}
\newtheorem{cor}[prop]{Corollary}
\title[]{A criterion for the properness of the $K$-energy in a general K\"ahler class (II)}
\author{Haozhao Li$^1$  }
\address{ Department of Mathematics, University of Science and Technology
of China, Hefei, 230026, Anhui province, China and Wu Wen-Tsun Key
Laboratory of Mathematics, USTC, Chinese Academy of Sciences, Hefei
230026, Anhui,  China} \email{hzli@ustc.edu.cn}
\author{Yalong Shi$^2$}
\address{
Department of Mathematics and Institute of Mathematical Science,
Nanjing University, Nanjing, 210093, Jiangsu province, China}
\email{shiyl@nju.edu.cn}
\thanks{$^1$Research
partially supported by NSFC grant  No. 11131007.}
\thanks{$^2$Research partially supported by NSFC grants No. 11101206.}
\begin{document}

\bibliographystyle{plain}

\date{}

\maketitle

\begin{abstract} In this paper, we give a result on the properness
of the $K$-energy, which answers a question of Song-Weinkove
\cite{[SW]} in any dimensions. Moreover, we extend our previous
result on the properness of $K$-energy in \cite{[LSY]} to the
case of modified $K$-energy associated to extremal K\"ahler metrics.

\end{abstract}

\tableofcontents
\section{Introduction}

This paper is a continuation of our previous work \cite{[LSY]}. In
\cite{[LSY]}, we give a criterion for the properness
 of the $K$-energy in a general K\"ahler class of a compact K\"ahler
 manifold by using Song-Weinkove's result on $J$-flow in \cite{[SW]}, which
 extends the works of Chen \cite{[Chen1]}, Song-Weinkove
 \cite{[SW]} and Fang-Lai-Song-Weinkove \cite{[FLSW]}. In
 \cite{[SW]}, Song-Weinkove showed that the
$K$-energy is proper  on a K\"ahler class $[\chi_0]$ of a
$n$-dimensional K\"ahler manifold $M$ with $c_1(M)<0$ whenever there are  K\"ahler metrics $\oo\in -\pi c_1(M)$ and $\chi'\in [\chi_0]$ such
that \beq \Big(-n\frac { \pi c_1(M)\cdot
[\chi_0]^{n-1}}{[\chi_0]^n}\chi'-(n-1)\oo\Big) \wedge
\chi'^{n-2}>0. \label{eq:B1} \eeq Moreover, Song-Weinkove asked whether the $K$-energy is bounded from below if the inequality
(\ref{eq:B1}) is not strict (Remark 4.2 of \cite{[SW]}). In  \cite{[FLSW]}
 Fang-Lai-Song-Weinkove  studied the $J$-flow on the boundary of the K\"ahler cone and
 gave an affirmative answer in complex dimension 2. In \cite{[LSY]},
 we give a partial answer to this question, which says
 that the $K$-energy is proper if $c_1(M)<0$ and the K\"ahler class
 $[\chi_0]$ satisfies
 \beq
 -n\frac { c_1(M)\cdot
[\chi_0]^{n-1}}{[\chi_0]^n}[\chi_0]+(n-1)c_1(M) \geq 0.  \nonumber
\eeq

The first main result in this paper is the following theorem, which
answers the question of Song-Weinkove in any dimensions.

\begin{theo}\label{theo:Bmain1}Let $M$ be a $n$-dimensional compact K\"ahler manifold
with $c_1(M)<0$. If the K\"ahler class $[\chi_0]$ satisfies the
property that  there are two K\"ahler metrics $\chi'\in [\chi_0]$
and $\oo\in -\pi c_1(M)$ such that \beq \Big(-n\frac {\pi
c_1(M)\cdot [\chi_0]^{n-1}}{[\chi_0]^n}\chi'-(n-1)\oo\Big) \wedge
\chi'^{n-2}\geq 0, \label{eq:109} \eeq then the $K$-energy is proper
on the K\"ahler class $[\chi_0]$.
\end{theo}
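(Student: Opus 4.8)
The plan is to reduce the non-strict inequality (\ref{eq:109}) to the strict case (\ref{eq:B1}) treated by Song-Weinkove by a perturbation argument, combined with the openness of the Kähler cone and a careful limiting analysis of the relevant functionals. Concretely, I would first recall that the $K$-energy $\nu_{[\chi_0]}$ being proper means $\nu_{[\chi_0]}(\varphi)\to+\infty$ whenever the Aubin-Mabuchi $J$-functional (or the distance $d_1$, or $\sup\varphi-\int\varphi\,\omega^n$) tends to infinity. The natural strategy is: given metrics $\chi'\in[\chi_0]$ and $\omega\in-\pi c_1(M)$ satisfying (\ref{eq:109}), perturb the class. Let $c_0 := -n\,\pi c_1(M)\cdot[\chi_0]^{n-1}/[\chi_0]^n>0$. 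For small $t>0$ set $\chi_t' := (1+t)\chi'$, which lies in the class $(1+t)[\chi_0]$, and observe that replacing $[\chi_0]$ by $(1+t)[\chi_0]$ only rescales the topological constant: $c_0$ is invariant under scaling of $[\chi_0]$ since it is homogeneous of degree $0$. So scaling alone does not produce strictness. Instead I would perturb $\omega$ downward: since $-\pi c_1(M)$ is a Kähler class and we only need \emph{some} $\omega\in-\pi c_1(M)$, replace $\omega$ by $\omega$ itself but strengthen by adding a positive multiple of $\chi'$ to the left side. That is, the left side of (\ref{eq:109}) is $\geq0$; for any $\varepsilon>0$,
\[
\Big(-n\frac{\pi c_1(M)\cdot[\chi_0]^{n-1}}{[\chi_0]^n}\chi' - (n-1)\omega + \varepsilon\,\chi'\Big)\wedge\chi'^{n-2} > 0 .
\]
This says the strict inequality (\ref{eq:B1}) holds with the constant $c_0$ replaced by $c_0+\varepsilon$, which is precisely the constant attached to a class $[\chi_\varepsilon]$ with a slightly more negative "average scalar-type" quantity. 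The difficulty is that $c_0$ is rigidly determined by $[\chi_0]$, so I must realize $c_0+\varepsilon$ as the genuine topological constant of a nearby Kähler class.

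The key computational step is therefore to find a path of Kähler classes $[\chi_0^{(s)}]$ with $[\chi_0^{(0)}]=[\chi_0]$ such that the associated constant $c(s) := -n\,\pi c_1(M)\cdot[\chi_0^{(s)}]^{n-1}/[\chi_0^{(s)}]^n$ strictly increases, while (\ref{eq:B1}) continues to hold along the path for suitable representatives. A convenient choice is $[\chi_0^{(s)}] = [\chi_0] + s(-\pi c_1(M))$ for small $s\geq 0$, or more flexibly $[\chi_0^{(s)}] = [\chi_0] - s\,\pi c_1(M)$; since $-\pi c_1(M)>0$ and $[\chi_0]>0$, this stays in the Kähler cone for $s\geq0$ small (indeed for all $s\ge0$). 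One computes $c(s)$ as a rational function of $s$ and checks $c'(0)$ has a sign; if the sign is favorable we win immediately, and if not we use the opposite direction $[\chi_0] + s\,\pi c_1(M)$ (staying Kähler for $s$ in a small interval, using openness of the cone, provided $c_1(M)<0$ makes $\pi c_1(M)$ a well-defined class — note $-\pi c_1(M)$ is Kähler so $\pi c_1(M)$ is anti-Kähler, and $[\chi_0]+s\pi c_1(M)=[\chi_0]-s(-\pi c_1(M))$ is Kähler for small $s>0$). Along this path, choosing the representative $\chi'_s := \chi' - s\,\mathrm{Ric}(\omega_0)$-type forms close to $\chi'$ and $\omega$ fixed, the left side of (\ref{eq:109}) varies continuously as a positive-semidefinite $(n-1,n-1)$-form; by the perturbation above it becomes strictly positive for the perturbed class once $c(s)>c(0)=c_0$. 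Then Song-Weinkove's theorem (the strict case, quoted in the introduction as the origin of (\ref{eq:B1})) gives properness of the $K$-energy on $[\chi_0^{(s)}]$ for every small such $s$.

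Finally I would pass to the limit $s\to0$. This is the main obstacle: properness is not obviously a closed condition as the Kähler class varies, because the $K$-energy, its lower bound, and the comparison functional $J$ all depend on the class, and the "properness constants" from Song-Weinkove could a priori degenerate as $s\to0$. To handle this I would instead argue that properness on $[\chi_0^{(s)}]$ for all small $s>0$ \emph{plus} the semidefinite inequality (\ref{eq:109}) at $s=0$ together yield, via the explicit estimates in the proof of Song-Weinkove / of our earlier paper \cite{[LSY]}, a uniform lower bound for $\nu_{[\chi_0]}$ along any divergent sequence: one runs the same Moser-iteration / $J$-flow monotonicity argument directly on $[\chi_0]$, where the inequality (\ref{eq:109}) with equality allowed still furnishes the crucial non-negativity of the relevant curvature/Hessian term, only losing a borderline (rather than strict) coercivity; the gap is then closed by the $\varepsilon$-room created above. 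In other words, rather than literally taking a limit of "properness" statements, I would re-derive the a priori estimate on $[\chi_0]$ itself, using the perturbed classes only to supply the strictly positive test form $-n c(s)\chi'_s-(n-1)\omega$ in the energy estimate, and then let $s\to0$ inside that single estimate. The technical heart is checking that all constants entering the Song-Weinkove-type bound (the Sobolev constant, the bound coming from $\Delta$ of the $J$-flow potential, the Green's function bound) stay uniform as $s\to0$, which holds because the geometric data $(\chi',\omega)$ converge smoothly; this uniformity is exactly the estimate I expect to occupy most of the work.
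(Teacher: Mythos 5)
There is a genuine gap, and it sits exactly at the point where you write ``I must realize $c_0+\varepsilon$ as the genuine topological constant of a nearby K\"ahler class.'' That is the wrong pivot. The paper's proof keeps the class $[\chi_0]$ fixed and instead perturbs the \emph{reference form}: writing $c=-\pi c_1(M)\cdot[\chi_0]^{n-1}/[\chi_0]^n$, your own inequality
$\bigl((nc+\ee)\chi'-(n-1)\oo\bigr)\wedge\chi'^{n-2}>0$
is identically equal to
$\bigl(n(c+\ee)\chi'-(n-1)(\oo+\ee\chi')\bigr)\wedge\chi'^{n-2}>0$,
and $c+\ee$ \emph{is} the genuine topological constant of the pair $(\oo+\ee\chi',[\chi_0])$, since $[\oo+\ee\chi']\cdot[\chi_0]^{n-1}/[\chi_0]^n=c+\ee$. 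Song--Weinkove's existence theorem for critical points of the $J$-flow applies to an arbitrary K\"ahler reference form, not only to representatives of $-\pi c_1(M)$, so one immediately gets a critical metric for $\hat J_{\oo+\ee\chi',\chi_0}$ and hence a lower bound for that functional on the original space $\cH(M,\chi_0)$. No change of K\"ahler class and no limit $s\to 0$ are needed. Your actual plan --- deform $[\chi_0]$ to $[\chi_0^{(s)}]$ so that $c(s)$ increases, prove properness there, and then pass to $s\to 0$ --- does not close: (a) the perturbed subsolution inequality on $[\chi_0^{(s)}]$ is a competition between two $O(s)$ terms ($n(c(s)-c(0))\chi'^{n-1}$ against the variation coming from $\chi'_s$ and the wedge powers), so it need not hold; (b) properness is not known to be a closed condition in the class, and your fallback of ``running the borderline estimate directly on $[\chi_0]$'' is precisely the boundary-case $J$-flow problem of Fang--Lai--Song--Weinkove, which is open for $n\ge 3$. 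The uniform-constants argument you defer to is exactly the part that cannot be carried out at the boundary.

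A second missing ingredient: even with the correct perturbation, one only obtains $\hat J_{\oo+\ee\chi_0,\chi_0}\ge -C$ (after using the Sz\'ekelyhidi-type comparison to pass from $\oo+\ee\chi'$ to $\oo+\ee\chi_0$), hence
$\hat J_{\oo,\chi_0}(\varphi)\ge -\ee\bigl(I_{\chi_0}(\varphi)-J_{\chi_0}(\varphi)\bigr)-C$,
which by itself says nothing about properness of the $K$-energy. The paper closes the gap with Tian's $\alpha$-invariant: the entropy term in the decomposition $\mu_{\chi_0}(\varphi)=\int_M\log\frac{\chi_\varphi^n}{\chi_0^n}\frac{\chi_\varphi^n}{n!}+\hat J_{-Ric(\chi_0),\chi_0}(\varphi)$ dominates $\frac{n+1}{n}\alpha\,(I_{\chi_0}-J_{\chi_0})-C$, and choosing $\ee<\frac{n+1}{n}\alpha_M([\chi_0])$ yields $\mu_{\chi_0}\ge(\frac{n+1}{n}\alpha-\ee)(I_{\chi_0}-J_{\chi_0})-C$. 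You allude to ``the gap is then closed by the $\varepsilon$-room created above,'' but the $\varepsilon$-room produces a \emph{negative} multiple of $I-J$; without the $\alpha$-invariant estimate there is nothing positive to absorb it, so the properness conclusion is not reached even granting your perturbed lower bounds.
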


Our second main result is to extend \cite{[LSY]} to the case of
extremal K\"ahler metrics. To state our main results, we recall
Tian's $\alpha$-invariant for a K\"ahler class $[\chi_0]$:
$$\al_M([\chi_0])=\sup\Big\{\al>0\;\Big|\;\exists \,C>0,\;
 \int_M\; e^{-\al(\varphi-\sup\varphi)}\chi_0^n\leq C,
 \quad \forall\;\varphi\in \cH(M,  \chi_0 )\Big\},$$
 where $\cH(M,  \chi_0 )$ denotes the space of K\"ahler potentials
 with respect to the metric $\chi_0$.
For any compact subgroup $G$ of $\Aut(M)$, and a $G$-invariant
K\"ahler class $[\chi_0]$, we can similarly define the
$\alpha_{M,G}$ invariant by using $G$-invariant potentials in the
definition.

\begin{theo}\label{theo:Amain1}Let $M$ be a $n$-dimensional compact K\"ahler manifold and
$X$ an extremal vector field of the K\"ahler class $[\chi_0]$ with potential function $\theta_X:=\theta_X(\chi_0)$. Assume $\Im X$ generates a compact group of holomorphic automorphisms\footnote{If $[\chi_0]=c_1(M)$, then this is always true, see Theorem F of \cite{[FM]}.}, and $L_{  \Im X }\chi_0=0$.
If the K\"ahler class $[\chi_0]$ satisfies the following conditions
for some constant $\ee:$
\begin{enumerate}
  \item[(1)] $0\leq \ee<\frac {n+1}{n}\al_M([\chi_0]), $
  \item[(2)] $\pi c_1(M)<(\ee+\min\theta_X) [\chi_0],$
  \item[(3)]   \beq
 \Big(-n\frac {\pi c_1(M)\cdot
[\chi_0]^{n-1}}{[\chi_0]^n}+\min_M\te_X+\ee\Big)[\chi_0]+(n-1)\pi
c_1(M)  >0, \nonumber \eeq
\end{enumerate}
then the modified $K$-energy is proper on $\cH_X(M, \chi_0),$ where $\cH_X(M, \chi_0)$ is the subspace of $\cH(M, \chi_0)$ with the extra condition $\Im X (\varphi)=0$.  If instead of (1), we assume  $[\chi_0]$ is $G$-invariant for a compact subgroup $G$ of $Aut(M)$ ,  and $0\leq \ee<\frac {n+1}{n}\al_{M,G}([\chi_0])$, then  the modified  $K$-energy is proper on the space of $G$-invariant potentials.
\end{theo}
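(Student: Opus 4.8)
The plan is to establish properness of $\cM_X$ on $\cH_X(M,\chi_0)$ in the standard form: to produce $\dd>0$ and $C$, independent of $\varphi$, with $\cM_X(\varphi)\geq\dd\,J_{\chi_0}(\varphi)-C$ (and likewise on the space of $G$-invariant potentials). The three ingredients I would feed into a Chen--Tian--type decomposition of $\cM_X$ in the class $[\chi_0]$ are: Song--Weinkove's estimate for the $J$-flow, which will be controlled by condition (3); Tian's $\al$-invariant, controlled by condition (1); and a uniform lower bound for the extremal potential $\te_X$ over all metrics of the class $[\chi_0]$, which rests on condition (2).

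First I would fix the reference metric. By condition (2) the class $(\ee+\min_M\te_X)[\chi_0]-\pi c_1(M)$ is K\"ahler, so I pick a K\"ahler form $\oo$ representing it. Then $[\mathrm{Ric}(\chi_0)]+[\oo]-(\ee+\min_M\te_X)[\chi_0]=0$, so the $\partial\bar\partial$-lemma gives $h\in C^\infty(M)$ with $\mathrm{Ric}(\chi_0)=-\oo+(\ee+\min_M\te_X)\chi_0+\sqrt{-1}\,\partial\bar\partial h$. Substituting this into the Chen--Tian decomposition of the ordinary $K$-energy, adding the correction that turns it into the modified $K$-energy, and rewriting the Aubin--Mabuchi terms by means of
$$\cJ_\oo(\varphi)=\tfrac1V\int_M\varphi\,\oo\wedge\sum_{j=0}^{n-1}\chi_0^j\wedge\chi_\varphi^{n-1-j}-c_\oo\,\mathbf E(\varphi),\qquad (I-J)_{\chi_0}(\varphi)=\mathbf E(\varphi)-\tfrac1V\int_M\varphi\,\chi_\varphi^n,$$
where $\cJ_\oo$ is Donaldson's $J$-functional, $c_\oo:=n\tfrac{[\oo]\cdot[\chi_0]^{n-1}}{[\chi_0]^n}$ and $\mathbf E(\varphi):=\tfrac1{(n+1)V}\sum_{j=0}^n\int_M\varphi\,\chi_0^j\wedge\chi_\varphi^{n-j}$, I expect to arrive at a decomposition of the form
$$\cM_X(\varphi)=\mathrm{Ent}(\varphi)+\cJ_\oo(\varphi)-\bigl(\ee+\min_M\te_X\bigr)(I-J)_{\chi_0}(\varphi)+R_X(\varphi)+\mathrm{const},$$
where $\mathrm{Ent}(\varphi):=\tfrac1V\int_M\log\tfrac{\chi_\varphi^n}{\chi_0^n}\,\chi_\varphi^n\geq 0$ (Jensen), the contribution of $h$ is bounded and absorbed, and $R_X$ collects the genuinely $\te_X$-dependent terms. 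A direct cohomological computation gives
$$c_\oo[\chi_0]-(n-1)[\oo]=\Bigl(-n\tfrac{\pi c_1(M)\cdot[\chi_0]^{n-1}}{[\chi_0]^n}+\min_M\te_X+\ee\Bigr)[\chi_0]+(n-1)\pi c_1(M),$$
which is exactly the left-hand side of condition (3); thus (3) is precisely the cone condition for the $J$-flow in $[\chi_0]$ with reference form $\oo$, and by Song--Weinkove's theorem together with the passage from cohomological to pointwise positivity of \cite{[LSY]} and of the proof of Theorem \ref{theo:Bmain1}, the $J$-flow converges, so $\cJ_\oo(\varphi)\geq -C$.

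Next I would estimate $R_X$ and $\mathrm{Ent}$. Since $L_{\Im X}\chi_0=0$ and $\Im X$ generates a compact torus, on $\cH_X(M,\chi_0)$ each $\te_X(\chi_\varphi)$ is real and is a moment map for that torus action with respect to $\chi_\varphi$; because the image of such a moment map is the convex hull of its values on the zero set of $X$, a set that does not depend on the choice of metric in $[\chi_0]$, we have $\te_X(\chi_\varphi)\geq\min_M\te_X$ for every $\varphi$, and feeding this into $R_X$ should give $R_X(\varphi)\geq\min_M\te_X\,(I-J)_{\chi_0}(\varphi)-C$, hence $\cM_X(\varphi)\geq\mathrm{Ent}(\varphi)-\ee\,(I-J)_{\chi_0}(\varphi)-C$. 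For $\mathrm{Ent}$, condition (1) lets me choose $\al$ with $\tfrac{n}{n+1}\ee<\al<\al_M([\chi_0])$, so that $\int_M e^{-\al(\varphi-\sup\varphi)}\chi_0^n\leq C$; applying the Gibbs (Jensen) inequality to the probability measure $\chi_\varphi^n/V$ yields
$$\mathrm{Ent}(\varphi)\ \geq\ \al\Bigl(\sup_M\varphi-\tfrac1V\int_M\varphi\,\chi_\varphi^n\Bigr)-C\ \geq\ \al\,I_{\chi_0}(\varphi)-C\ \geq\ \tfrac{n+1}{n}\,\al\,(I-J)_{\chi_0}(\varphi)-C,$$
using $\sup_M\varphi\geq\tfrac1V\int_M\varphi\,\chi_0^n$ and the standard inequality $I_{\chi_0}\geq\tfrac{n+1}{n}(I-J)_{\chi_0}$. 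Since $\tfrac{n+1}{n}\al>\ee$, the two displays combine to $\cM_X(\varphi)\geq\bigl(\tfrac{n+1}{n}\al-\ee\bigr)(I-J)_{\chi_0}(\varphi)-C\geq\dd\,J_{\chi_0}(\varphi)-C$, the desired estimate. The $G$-invariant case is handled identically with $\al_{M,G}$ in place of $\al_M$, using that the $J$-flow preserves $G$-invariance and that the extremal vector field, being canonically attached to $[\chi_0]$, commutes with $G$.

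The step I expect to be the main obstacle is the bookkeeping behind the last paragraph: one has to be sure that conditions (1)--(3) fit together with no slack lost, i.e.\ that the sharpened entropy estimate (with (1)'s constant $\tfrac{n+1}{n}$ exactly what is needed), the mere lower bound $\cJ_\oo\geq -C$ from (3), and the lower bound $\te_X(\chi_\varphi)\geq\min_M\te_X$ from (2) really combine to a strictly positive net coefficient of $(I-J)_{\chi_0}$, rather than being used to control the three summands separately. Secondary technical points are: the need for the $J$-flow input in cohomological and, should there be no margin in (3), non-strict form, which is supplied by the method of Theorem \ref{theo:Bmain1}; the assertion that the moment polytope of the torus generated by $\Im X$ is independent of the representative of $[\chi_0]$, so that $\te_X(\chi_\varphi)\geq\min_M\te_X$ holds uniformly; and the verification that the correction $R_X$ actually satisfies the claimed lower bound, which may require choosing the path defining $R_X$ carefully or exploiting convexity of the modified energy functionals along geodesics.
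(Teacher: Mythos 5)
Your overall architecture (entropy plus the $\alpha$-invariant from (1), a $J$-type lower bound from (3), and the class-independence of the range of $\te_X$ from (2) and Zhou--Zhu) is the same as the paper's, and your cohomological bookkeeping is correct: with your choice $[\oo]=(\ee+\min_M\te_X)[\chi_0]-\pi c_1(M)$, the cone condition for the \emph{ordinary} $J$-flow in $[\chi_0]$ is exactly condition (3). But your route differs from the paper's in one essential respect, and that is where the gap sits. You split the modified $K$-energy as (ordinary $\hat J_{\oo,\chi_0}$) plus (the correction $R_X$) and bound the two pieces separately: $\hat J_{\oo,\chi_0}\geq -C$ by Song--Weinkove, and $R_X(\varphi)\geq \min_M\te_X\,(I-J)_{\chi_0}(\varphi)-C$ by ``feeding in'' $\te_X(\chi_\varphi)\geq\min_M\te_X$. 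That second bound is unjustified, and I do not see how to get it from the pointwise lower bound on $\te_X$ alone: $R_X(\varphi)=\int_0^1\int_M\dot\varphi_t\,\te_X(\chi_{\varphi_t})\,\frac{\chi_{\varphi_t}^n}{n!}\,dt$, and $\dot\varphi_t$ has no sign, so a pointwise bound on $\te_X$ does not pass through the integral. Concretely, on the linear path with $\sup_M\varphi=0$ one has $\dot\varphi\leq 0$, so the usable pointwise bound is the \emph{upper} one $\te_X\leq\max_M\te_X$, which yields $R_X(\varphi)\geq -c_n\max_M\te_X\,(I-J)_{\chi_0}(\varphi)-C$; with the coefficient $-c_n\max_M\te_X$ in place of $\min_M\te_X$, conditions (1)--(3) no longer produce a positive net coefficient of $(I-J)_{\chi_0}$. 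Normalizing instead $\inf_M\varphi=0$ makes $\dot\varphi\geq 0$ and lets you insert $\min_M\te_X$, but the resulting factor $\int_0^1\int_M\dot\varphi\,\chi_{\varphi_t}^n\,dt$ (a multiple of the Monge--Amp\`ere energy in the wrong normalization) is not controlled by $(I-J)_{\chi_0}(\varphi)$. You flagged this step yourself as the one needing care; it is in fact the whole difficulty, and the hypotheses leave no slack to absorb a wrong constant here.

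The paper avoids the issue by never separating $R_X$ from $\hat J$: it takes $\oo\in\ee[\chi_0]-\pi c_1(M)$ (not $(\ee+\min_M\te_X)[\chi_0]-\pi c_1(M)$) and works with the modified functional $\td J_{\oo,\chi_0}=\hat J_{\oo,\chi_0}+R_X$ as a single object. It proves $\td J_{\oo,\chi_0}$ is convex along $C^{1,1}$ geodesics, and establishes the existence of a critical point of the Euler--Lagrange equation $\oo\wedge\chi_\varphi^{n-1}=(c+\frac1n\te_X(\chi_\varphi))\chi_\varphi^n$ by running the modified $J$-flow $\p_t\varphi=\frac1n\big(nc+\te_X(\chi_\varphi)-\La_{\chi_\varphi}\oo\big)$; the long-time existence and convergence of this flow is the genuinely new analytic content of Section 3. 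The bound $\te_X\geq\min_M\te_X$ is then used only once, at a single fixed reference metric $\chi'$, to check that condition (3) implies the subsolution condition $(nc\chi'-(n-1)\oo)\wedge\chi'^{n-2}+\te_X(\chi')\chi'^{n-1}>0$ of Theorem \ref{theo:001}; it never has to be propagated along a path of potentials. To repair your argument you would either have to prove the $R_X$ estimate with the sharp constant $\min_M\te_X$ (which I doubt holds in general), or do what the paper does and develop the modified $J$-flow.
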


For the definitions of extremal vector field and modified $K$-energy, see section \ref{sec:Amain1}. Note that the $\min \theta_X$ here is actually an invariant of the K\"ahler class according to \cite{[M]} and Appendix of \cite{[ZhZh]}. We can also replace the condition (3) of Theorem \ref{theo:Amain1} by some weaker assumptions as in Theorem \ref{theo:Bmain1}, however we prefer this version since (3) is easier to check.  The proof of Theorem \ref{theo:Amain1} relies on the study of the
modified $J$-flow, which is an extremal version of the usual $J$
flow defined by Donaldson \cite{[Don1]} and Chen \cite{[Chen1]}.
Here we modify the proof of Song-Weinkove \cite{[SW]} to get the
existence of critical metrics of the modified $J$ functional and
then we apply the argument in \cite{[LSY]} to get the properness of
the modified $K$-energy.

 In a recent interesting paper \cite{[Der2]}, Dervan
gives a different sufficient condition on the properness of the $K$
energy on a general K\"ahler class by direct analyzing the
expression of the $K$ energy, which gives better results in some
examples (cf. \cite{[LSY]}\cite{[Der2]}). While Dervan's condition is useful mainly when $M$ is Fano,  our theorem applies on more general manifolds. Whether one can improve both results is still an interesting problem.

In section \ref{sec:Bmain1}, we prove Theorem \ref{theo:Bmain1}, which is a strengthen of the Main Theorem of \cite{[LSY]}. Then in section \ref{sec:Amain1}, we study the modified $J$-flow and prove Theorem \ref{theo:Amain1}.\\

\section{Proof of Theorem \ref{theo:Bmain1}} \label{sec:Bmain1}

In this section we prove Theorem \ref{theo:Bmain1}. Here we use the
notations in our previous work \cite{[LSY]}.

\begin{proof}[Proof of Theorem \ref{theo:Bmain1}]
By the assumption (\ref{eq:109}),  for sufficiently small $\ee>0$ we
have
    \beq
 \Big( (nc+\ee)\chi'-(n-1)\oo\Big )\wedge \chi'^{n-2}  >0,
 \label{eq:A0001}
\eeq where \beq c=\frac{-\pi c_1(M)\cdot
[\chi_0]^{n-1}}{[\chi_0]^n}.\nonumber \eeq We can write
$(\ref{eq:A0001})$ as \beq
 \Big( n(c+\ee)\chi'-(n-1)(\oo+\ee \chi')\Big )\wedge \chi'^{n-2}  >0,
 \label{eq:A002}
\eeq Since $\oo$ and $\chi'$ are K\"ahler metrics, we have
$\oo+\ee\chi'>0$.  by Song-Weinkove's result (cf. Theorem 1.1 in
\cite{[SW]}) there exists a K\"ahler metric $\chi\in [\chi_0]$ such
that
$$(\oo+\ee\chi')\wedge \chi^{n-1}=(c+\ee)\chi^n.$$
Thus, the functional $\hat J_{\oo+\ee\chi', \chi_0}$ is bounded from
below on $[\chi_0].$ Since $\chi'\in [\chi_0]$, by the argument of
\cite{[Sz]}\footnote{The authors would like to thank G. Sz\'ekelyhidi for telling them this fact, which they overlooked when preparing \cite{[LSY]}.} \cite{[LSY]} there is a uniform constant $C>0$ such that for any
$\varphi\in \cH(M, \chi_0)$,
$$|\hat J_{\oo+\ee\chi', \chi_0}(\varphi)-\hat J_{\oo+\ee\chi_0, \chi_0}(\varphi)|\leq C . $$
Therefore, $\hat J_{\oo+\ee\chi_0, \chi_0}(\varphi)$ is bounded from
below and  we have \beq \hat J_{\oo, \chi_0}(\varphi)\geq -\ee
\Big(I_{\chi_0}(\varphi)-J_{\chi_0}(\varphi)\Big)-C,\quad
\forall\;\varphi\in \cH(M, \chi_0). \label{eq:011}\eeq

Now using Tian's $\al$-invariant we have (see Lemma 4.1 of
\cite{[SW]}, also \cite{[T]} page 95 ) \beqn\int_X\;\log \frac
{\chi_{\varphi}^n}{\chi_0^n}\,\frac {\chi_{\varphi}^n}{n!}&\geq&
 \al  I_{\chi_0}(\varphi)-C\nonumber\\&\geq& \frac {n+1}{n}\al \cdot (I_{\chi_0}(\varphi)
-J_{\chi_0}(\varphi))-C,\quad \forall\,\varphi\in \cH(M, \chi_0)
\label{eq:012}\eeqn for any $\al\in (0, \al_M([\chi_0])).$ Set
$\oo_0:=-Ric(\chi_0)>0$. Combining the inequalities
(\ref{eq:011})-(\ref{eq:012}) we have \beqs
\mu_{\chi_0}(\varphi)&=&\int_X\;\log \frac
{\chi_{\varphi}^n}{\chi_0^n}\,\frac {\chi_{\varphi}^n}{n!}+\hat J_{\oo_0, \chi_0}(\varphi)\\
&\geq &\int_X\;\log \frac {\chi_{\varphi}^n}{\chi_0^n}\,\frac
{\chi_{\varphi}^n}{n!}+\hat J_{\oo, \chi_0}(\varphi)-C\\&\geq &
 \Big(\frac
{n+1}{n}\al-\ee\Big)\Big(I_{\chi_0}(\varphi)-J_{\chi_0}(\varphi)\Big)-C.
\eeqs
Therefore, for sufficently small $\ee$   the $K$ energy is proper.

\end{proof}

\section{Proof of Theorem \ref{theo:Amain1}}\label{sec:Amain1}
\subsection{The modified $K$-energy  $\td \mu$ and $\td J$ functional}
We first recall some notations in \cite{[LSY]}.  Let $(M, \chi_0)$
be a $n$-dimensional compact K\"ahler manifold with a K\"ahler form
$$\chi_0=\frac {\sqrt{-1}}2h_{i\bar j}dz^i\wedge d\bar z^j.$$ We denote by
$\cH(M, \chi_0)$ the space of K\"ahler potentials
$$\cH(M, \chi_0)=\{\varphi\in C^{\infty}(M, \RR)\;|\; \chi_{\varphi}=\chi_0+\pbp \varphi>0\}.$$
A metric $\chi$ is called ``extremal" if the gradient of the scalar curvature $R(\chi)$ is a holomorphic vector field, i.e.
$$R(\chi)-\underline{R}-\te_X(\chi)=0, $$
where $\underline{R}$ is the integral mean value of $R(\chi)$ (which is a topological number) and $\te_X(\chi)$ is the normalized holomorphic potential of a holomorphic vector field $X$ with respect to the metric $\chi. $ Namely,  $\te_X(\chi)$ satisfies the equalities
$$L_X\chi=\pbp \te_X(\chi),\quad \int_M \te_X(\chi){\chi^n\over n!}=0.$$
Such a holomorphic vector field $X$ is called an ``extremal vector field".  Futaki and Mabuchi proved that ``extremal vector field" makes sense in a general K\"ahler manifold and is unique \cite{[FM]}.  Here we always assume that $L_{\Im X}\chi=0$, hence $\theta_X$ is real-valued.
For such an extremal vector field $X$, we modified the space of K\"ahler potentials accordingly:
$$\cH_X(M, \chi_0)=\{\varphi\in \cH(M,\chi_0)\;|\; \Im X(\varphi)=0\}.$$
For any $\varphi\in\cH_X(M,\chi_0)$, the potential $\theta_X(\chi_\varphi)$ is also real-valued (Since we always have $\te_X(\chi_\varphi)=\te_X(\chi)+X(\varphi)$.). Then we can define
the modified $K$-energy on $\cH_X(M,\chi_0)$ by the variational formula
$$\delta \td\mu_{\chi_0}(\varphi)=-\int_M\,\delta\varphi(R(\chi_\varphi)-\underline{R}-\te_X(\chi_\varphi))\,\frac {\chi_\varphi^n}{n!}.$$
Then the critical point of $\td \mu_{\chi_0}$ is just an extremal
K\"ahler metric in $[\chi_0]$.

 The $\td J$ functional with respect to a reference closed (1,1)-form $\oo$ (not necessarily positive) is defined by the formula
\beqn \td J_{\oo, \chi_0}(\varphi)&=&\hat J_{\oo,
\chi_0}(\varphi)+\int_0^1\,\int_M\,\pd
{\varphi_t}t\te_X(\varphi_t)\frac
{\chi_{\varphi_t}^n}{n!}dt\nonumber
\\&=&
\int_0^1\,\int_M\;\pd {\varphi_t}t (\oo\wedge
\chi_{\varphi_t}^{n-1}-c\chi_{\varphi_t}^n)\frac {
dt}{(n-1)!}+\int_0^1\,\int_M\,\pd {\varphi_t}t\te_X(\varphi_t)\frac
{\chi_{\varphi_t}^n}{n!}dt, \label{eq:003} \eeqn where
$$c={[\oo][\chi_0]^{n-1}\over [\chi_0]^n}.$$ When we choose
$\oo_0=-Ric(\chi_0)$, then a direct computation shows that
\beq\label{eqn:K-J} \td\mu_{\chi_0}(\varphi)=\int_M\;\log \frac
{\chi_{\varphi}^n}{\chi_0^n}\,\frac {\chi_{\varphi}^n}{n!}+\td
J_{\oo_0, \chi_0}(\varphi).  \eeq

Note that the modified K-energy and $J$-functional actually make sense on the larger space $\cH(M,\chi_0)$, though the value may be not real. However the $\td J$ functional enjoys the following interesting property as the usual $J$ functional:
\begin{prop}\label{prop:convexity}
When $\oo$ is positive, the real part of $\td J_{\oo, \chi_0}$ is strictly convex
along any $C^{1, 1}$ geodesics in $\cH(M, \chi_0)$, and its imaginary part is linear.
\end{prop}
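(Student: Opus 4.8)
The plan is to reduce Proposition \ref{prop:convexity} to the well-known convexity of the usual $\hat J_{\oo,\chi_0}$ functional (due to Chen, see also \cite{[SW]}) by carefully analyzing the extra term
$$
\cS(\varphi) := \int_0^1\int_M \pd{\varphi_t}{t}\,\te_X(\varphi_t)\,\frac{\chi_{\varphi_t}^n}{n!}\,dt.
$$
First I would recall that along a $C^{1,1}$ geodesic $\varphi_t$ in $\cH(M,\chi_0)$ one has $\ddot\varphi_t - |\nabla\dot\varphi_t|^2_{\chi_{\varphi_t}} = 0$, and that the relevant derivatives along the path are computed by the standard formulas $\frac{d}{dt}\int_M f\,\chi_{\varphi_t}^n = \int_M (\dot f\,\chi_{\varphi_t}^n + f\,\DD_{\chi_{\varphi_t}}\dot\varphi_t\,\chi_{\varphi_t}^n)$. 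Since $\varphi\in\cH_X(M,\chi_0)$ is not assumed here (the proposition is stated on all of $\cH(M,\chi_0)$), the quantities may be complex; so I would write $X = \Re X + \i\,\Im X$ and use $\te_X(\chi_{\varphi_t}) = \te_X(\chi_0) + X(\varphi_t)$, noting that $\te_X$ depends affinely on the potential. The key point is that the extra term $\cS$ is built from the holomorphy of $X$, and holomorphic-vector-field identities (e.g. $\int_M \te_X\,\DD f\,\chi^n = -\int_M f\,X(\cdot)\cdots$, i.e. integration by parts turning $\DD$ against $\te_X$ into a first-order $X$-derivative) should make the second derivative of $\cS$ collapse.

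The main computation is thus: differentiate $\cS$ once in $t$ to get $\frac{d}{dt}\cS\big|_{\text{partial}} = \int_M \dot\varphi_t\,\te_X(\varphi_t)\,\frac{\chi_{\varphi_t}^n}{n!}$ (the standard fact that the $t$-derivative of such a "primitive" functional just strips the $\int_0^1 dt$), then differentiate a second time. The second derivative produces three pieces: one from $\ddot\varphi_t$, one from $\dot{(\te_X(\varphi_t))} = X(\dot\varphi_t)$, and one from $\frac{d}{dt}\chi_{\varphi_t}^n = \DD_{\chi_{\varphi_t}}\dot\varphi_t\cdot\chi_{\varphi_t}^n$. Using the geodesic equation to replace $\ddot\varphi_t$ by $|\nabla\dot\varphi_t|^2$, and integrating by parts the $\DD\dot\varphi_t$ term against $\te_X$ using the holomorphic potential identity $\int_M \te_X\,\DD_{\chi}\psi\,\chi^n = -\int_M \psi\,\bar\partial\te_X\cdot\overline{\partial\psi}\cdots$ — more precisely, since $L_X\chi = \pbp\te_X$ encodes $\nabla^{1,0}\te_X = X$, one gets $\int_M \te_X\,\DD\psi\,\chi^n = -\int_M X(\psi)\,\chi^n = -\int_M \langle\partial\psi,\bar\partial\te_X\rangle\,\chi^n$ — the terms should combine so that $\frac{d^2}{dt^2}\cS = \int_M |\bar\partial\dot\varphi_t|^2_{\chi_{\varphi_t}}\,\cdots - \int_M \langle\partial\dot\varphi_t,\overline{\partial\te_X}\cdots\rangle$; I expect these to telescope, leaving either $0$ or a term that is purely imaginary. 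Combined with $\frac{d^2}{dt^2}\hat J_{\oo,\chi_0}(\varphi_t) = \int_M \oo\wedge$ (something nonnegative built from $\partial\dot\varphi_t$) which is $\geq 0$ strictly when $\oo>0$ and $\dot\varphi_t$ nonconstant, one concludes the real part of $\td J$ is strictly convex and the imaginary part is linear (its second derivative vanishes). For the genuine $C^{1,1}$ (not smooth) case I would invoke the standard approximation/regularization argument (Chen's $\ee$-geodesics, or Berndtsson-type weak convexity) exactly as is done for $\hat J$, since no new difficulty arises there.

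The main obstacle I anticipate is the bookkeeping of the imaginary part: when $\varphi_t$ is complex-valued, $\dot\varphi_t$ and $\te_X(\varphi_t)$ both split into real and imaginary parts, and one must check that the cross terms assemble into something \emph{affine} in $t$ rather than merely bounded. Concretely, $\frac{d}{dt}\cS = \int_M \dot\varphi_t\,\te_X(\varphi_t)\,\frac{\chi_{\varphi_t}^n}{n!}$, and writing $\te_X(\varphi_t) = \te_X(\chi_0)+X(\varphi_t)$ shows the $t$-dependence sits inside $\chi_{\varphi_t}^n$ and inside $X(\varphi_t)$; the claim that this whole expression has constant derivative in $t$ is exactly the identity $\frac{d^2}{dt^2}\cS = 0$ on the imaginary part and $= \int_M|\nabla\dot\varphi_t|^2_{\chi_{\varphi_t}}\te_X\cdots$ contributions cancelling on the real part — so the heart of the proof is one clean integration-by-parts identity exploiting $\nabla^{1,0}\te_X = X$ being holomorphic. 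Once that identity is isolated, the rest is the routine convexity bookkeeping already present in \cite{[SW]} and \cite{[LSY]}.
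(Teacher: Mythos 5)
Your proposal follows essentially the same route as the paper: reduce to Chen's strict convexity of $\hat J_{\oo,\chi_0}$, show that the second $t$-derivative of the extra term $\int_M\dot\varphi_t\,\te_X(\varphi_t)\,\chi_{\varphi_t}^n/n!$ vanishes along smooth geodesics by combining the geodesic equation with the integration by parts coming from $\chi_\varphi^{i\bar j}\partial_{\bar j}\te_X(\chi_\varphi)=X^i$ (the paper packages exactly this cancellation as $\int_M L_X\big(\tfrac12\dot\varphi_t^2\,\chi_{\varphi_t}^n/n!\big)=0$), and then approximate $C^{1,1}$ geodesics by smooth ones as in Chen--Tian. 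The only points to tighten: the cancellation you hedge on is exact (the second derivative of the extra term is identically zero, not merely purely imaginary), and the potentials $\varphi_t\in\cH(M,\chi_0)$ are real-valued --- the imaginary part of $\td J$ arises solely because $\te_X(\varphi_t)=\te_X(\chi_0)+X(\varphi_t)$ is complex when $(\Im X)(\varphi_t)\neq 0$.
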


\begin{proof}
Since the usual $\hat J$ functional is real valued and strictly convex along any
$C^{1, 1}$ geodesics by the work of Chen, we only need to compute
the second order derivative of the additional term. Suppose the
geodesic is $C^2$, then \beqs
{d\over dt}\int_M \dot\varphi_t \te_X(\varphi_t){\chi^n_{\varphi_t}\over n!}&=& {d\over dt}\int_M \dot\varphi_t (\te_X(\chi_0)+X(\varphi_t)){\chi^n_{\varphi_t}\over n!}\\
&=& \int_M \Big[\ddot\varphi_t \te_X(\varphi_t)+X({1\over 2}\dot \varphi^2)\Big]{\chi^n_{\varphi_t}\over n!}+\int_M \dot\varphi_t \te_X(\varphi_t)\pbp \dot\varphi_t\wedge{\chi^{n-1}_{\varphi_t}\over (n-1)!}\\
&=& \int_M \Big[\ddot\varphi_t -<\partial\dot\varphi_t,\partial\dot\varphi_t>\Big]\te_X(\varphi_t){\chi^n_{\varphi_t}\over n!}+\int_M X({1\over 2}\dot \varphi^2){\chi^n_{\varphi_t}\over n!}\\
& & -\int_M \frac {\sqrt{-1}}2 \p\te_X(\varphi_t)\wedge \bar\p ({1\over 2}\dot\varphi^2_t)\wedge{\chi^{n-1}_{\varphi_t}\over (n-1)!}\\
&=&  \int_M \Big[\ddot\varphi_t -<\partial\dot\varphi_t,\partial\dot\varphi_t>\Big]\te_X(\varphi_t){\chi^n_{\varphi_t}\over n!}+\int_M L_X\Big({1\over 2}\dot\varphi^2_t{\chi^n_{\varphi_t}\over n!}\Big)\\
&=& \int_M \Big[\ddot\varphi_t
-<\partial\dot\varphi_t,\partial\dot\varphi_t>\Big]\te_X(\varphi_t){\chi^n_{\varphi_t}\over
n!}=0.\eeqs Then we approximate a general $C^{1,1}$ geodesic by
$C^2$ geodesics as Chen-Tian \cite{[CT]}. So we conclude that $\Re\td
J$ is also strictly convex along any $C^{1, 1}$ geodesics, and $\Im\td J$ is linear.
\end{proof}

A direct corollary of Proposition \ref{prop:convexity} is the
following result:

\begin{cor}\label{lem:4.2}
If $\td J_{\oo,\chi_0}$ has a critical point $\varphi\in\cH_X(M,\chi_0)$ and  $\oo>0$, then  $\td
J_{\oo,\chi_0}$ is bounded from below on $\cH_X(M,\chi_0)$.
\end{cor}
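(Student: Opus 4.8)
The plan is to combine the convexity statement of Proposition \ref{prop:convexity} with the standard fact that a convex function on a space connected by (almost-)geodesics through a critical point attains its minimum at that critical point. First I would recall that for any $\psi\in\cH_X(M,\chi_0)$ one can join $\varphi$ to $\psi$ by a $C^{1,1}$ geodesic $\varphi_t$, $t\in[0,1]$, with $\varphi_0=\varphi$ and $\varphi_1=\psi$, by the work of Chen on the existence and regularity of geodesics in the space of K\"ahler potentials. A point to check is that the geodesic stays inside $\cH_X(M,\chi_0)$: since the endpoints satisfy $\Im X(\varphi_0)=\Im X(\varphi_1)=0$ and $\Im X$ generates a one-parameter group of isometries of $\chi_0$, the flow of $\Im X$ acts on the geodesic equation preserving both endpoints, so by uniqueness of $C^{1,1}$ geodesics with given endpoints the whole path is $\Im X$-invariant; hence $\te_X(\chi_{\varphi_t})$ is real along the path and $\td J_{\oo,\chi_0}(\varphi_t)$ is real-valued.

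Next I would consider the function $f(t):=\td J_{\oo,\chi_0}(\varphi_t)=\Re\,\td J_{\oo,\chi_0}(\varphi_t)$ (the last equality because the path lies in $\cH_X$). By Proposition \ref{prop:convexity}, $f$ is convex on $[0,1]$ (strictly convex, in fact, but convexity suffices here). Since $\varphi=\varphi_0$ is a critical point of $\td J_{\oo,\chi_0}$ on $\cH_X(M,\chi_0)$, the one-sided derivative $f'(0^+)$ equals the differential of $\td J_{\oo,\chi_0}$ at $\varphi$ applied to $\dot\varphi_0$, which is $0$. A convex function on $[0,1]$ with nonnegative right derivative at $0$ is nondecreasing, so $f(1)\geq f(0)$, i.e. $\td J_{\oo,\chi_0}(\psi)\geq \td J_{\oo,\chi_0}(\varphi)$. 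As $\psi$ was arbitrary, $\td J_{\oo,\chi_0}$ is bounded below on $\cH_X(M,\chi_0)$ by the value $\td J_{\oo,\chi_0}(\varphi)$.

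The main technical obstacle is the regularity issue at the level of the geodesic: $C^{1,1}$ geodesics are not $C^2$, so the identity $f'(0^+)=\langle \text{grad}\,\td J, \dot\varphi_0\rangle$ and the convexity of $f$ must be justified through the $C^2$ approximation argument of Chen--Tian \cite{[CT]} already invoked in the proof of Proposition \ref{prop:convexity} — one approximates $\varphi_t$ by smooth $\varepsilon$-geodesics, uses that $\Re\,\td J$ is convex along these with controlled error, and passes to the limit; the first-variation formula is stable under this limit because $\td J$ and its first derivative depend on $\varphi_t$ only through $C^1$ quantities. Once this approximation is in place, the rest of the argument is the elementary convexity fact above.
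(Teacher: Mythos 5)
Your proof is correct and follows essentially the same route as the paper: connect $\varphi$ to an arbitrary $\psi\in\cH_X(M,\chi_0)$ by a $C^{1,1}$ geodesic, invoke Proposition \ref{prop:convexity}, and run Chen's convexity-plus-critical-endpoint argument from \cite{[Chen1]} (whose details, including the $\varepsilon$-geodesic approximation and the first-variation identity at $t=0$, you spell out where the paper simply cites Chen). The one step where you genuinely diverge is how you guarantee that $\td J_{\oo,\chi_0}$ is real-valued along the geodesic: you argue that the geodesic itself stays inside $\cH_X(M,\chi_0)$, using the isometric action of the flow of $\Im X$ on $\cH(M,\chi_0)$ (which requires $L_{\Im X}\chi_0=0$, assumed throughout this section) together with uniqueness of $C^{1,1}$ geodesics with prescribed endpoints. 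The paper avoids this entirely: by Proposition \ref{prop:convexity} the imaginary part of $\td J_{\oo,\chi_0}$ is \emph{linear} along any geodesic in the larger space $\cH(M,\chi_0)$, and since the values at the two endpoints (both in $\cH_X$) are real, it vanishes identically along the geodesic, so $\td J_{\oo,\chi_0}$ is real and strictly convex there without knowing the geodesic remains $\Im X$-invariant. Your equivariance/uniqueness argument is sound but imports extra machinery (uniqueness of weak solutions of the homogeneous complex Monge--Amp\`ere equation), whereas the paper's observation is cheaper and is exactly why the linearity of the imaginary part was built into Proposition \ref{prop:convexity}; it also explains why criticality in the full space, i.e.\ the Euler--Lagrange equation (\ref{eq:critical}), suffices to kill the endpoint derivative even when $\dot\varphi_0$ is not a priori $\Im X$-invariant. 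Both routes yield $\td J_{\oo,\chi_0}(\psi)\geq \td J_{\oo,\chi_0}(\varphi)$, so nothing essential is missing from your argument.
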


\begin{proof}
This is a minor modification of Chen's proof of Proposition 3 in  \cite{[Chen1]}.  We just connect any $\psi\in \cH_X(M,\chi_0)$ with $\varphi$ by a $C^{1,1}$ geodesic in $\cH(M,\chi_0)$. Then since both $\td J_{\oo,\chi_0}(\varphi)$ and $\td J_{\oo,\chi_0}(\psi)$ are real valued,  by Proposition \ref{prop:convexity}, $\td J_{\oo,\chi_0}$ is real valued along this geodesic, and hence strictly convex. The rest of the proof is identical to that of Chen in \cite{[Chen1]}, so we omit it.
\end{proof}

\subsection{The existence of critical points of $\td J$}

In this subsection, we always assume $\oo$ is a closed positive
(1,1)-form, i.e. a K\"ahler form. We want to find out the critical
point of $\td J_{\oo,\chi_0}$. By definition, a critical point
$\varphi\in\cH_X(M,\chi_0)$ satisfies
 \beq \label{eq:critical}
  \oo\wedge
\chi_{\varphi}^{n-1}=\Big(c+\frac 1n\te_X(\chi_\varphi)\Big)\chi_{\varphi}^n.
\eeq
We have a similar theorem as Song-Weinkove, saying that the existence of a ``subsolution" (in a suitable sense) to the above Euler-Lagrange equation will actually leads to a solution:
\begin{theo}\label{theo:001}
If there is a metric $\chi'\in [\chi_0]$ satisfying
  \beq
(nc\chi'-(n-1)\oo)\wedge \chi'^{n-2}+\te_X(\chi')\,\chi'^{n-1}>0, \label{eq:A002}
  \eeq
and $L_{\Im\ X} \oo=0$, then there is a smooth K\"ahler metric $\chi_{\varphi}=\chi_0+\pbp
\varphi\in [\chi_0]$
  satisfying the equation (\ref{eq:critical}),  and the solution $\chi_\varphi$ is unique.
\end{theo}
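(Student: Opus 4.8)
The plan is to adapt Song--Weinkove's analysis of the $J$-flow to the present modified setting. I would consider the \emph{modified $J$-flow}
\begin{equation*}
\pd{\varphi}{t}=c+\frac1n\te_X(\chi_\varphi)-\frac{\oo\wedge\chi_\varphi^{n-1}}{\chi_\varphi^n},\qquad \varphi(\cdot,0)=0,
\end{equation*}
whose stationary points are exactly the solutions of (\ref{eq:critical}) and which is the downward gradient flow of $\td J_{\oo,\chi_0}$ restricted to $\cH_X(M,\chi_0)$, so that $\td J_{\oo,\chi_0}(\varphi_t)$ is non-increasing along it. Since $\oo>0$, the right-hand side equals $-\tfrac1n\tr_{\chi_\varphi}\oo$ to leading order plus the first-order drift $\tfrac1n X(\varphi)$ (recall $\te_X(\chi_\varphi)=\te_X(\chi_0)+X(\varphi)$), so the flow is a strictly parabolic quasilinear equation and short-time existence and uniqueness are standard. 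Because $L_{\Im X}\chi_0=L_{\Im X}\oo=0$, the flow is equivariant under the one-parameter group generated by $\Im X$, so since $\Im X(\varphi(\cdot,0))=0$ uniqueness forces $\Im X(\varphi_t)\equiv 0$: the flow stays in $\cH_X(M,\chi_0)$ and $\te_X(\chi_{\varphi_t})$ remains real-valued.

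The core of the argument is a family of a priori estimates along the flow, uniform in $t$: first a $C^0$ bound on $\varphi_t$ (modulo a time-dependent additive constant), obtained exactly as in Song--Weinkove from the subsolution $\chi'$ and the maximum principle; then the crucial upper bound for $\tr_{\chi_0}\chi_{\varphi_t}$, which combined with the equation and the $C^0$ bound gives $C^{-1}\chi_0\le\chi_{\varphi_t}\le C\chi_0$ and makes the flow uniformly parabolic; and finally uniform $C^k$ bounds for all $k$ by a parabolic Evans--Krylov estimate followed by Schauder bootstrapping. I expect the trace estimate to be the main obstacle: following Song--Weinkove one studies the evolution of $\log\tr_{\chi_0}\chi_{\varphi_t}-A\varphi_t$ for a large constant $A$ and applies the maximum principle, and the cone condition (\ref{eq:A002}) supplies exactly the positivity needed to absorb the dangerous terms. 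The new feature is that differentiating the drift $\tfrac1n X(\varphi_t)$ produces an extra first-order term (harmless after enlarging $A$) together with genuinely new terms involving $\Na\te_X$ and $\Na^2\te_X$; it is precisely the extra summand $\te_X(\chi')\,\chi'^{n-1}$ in (\ref{eq:A002}) that is designed to dominate them, and one must also check that the non-Killing nature of $\Re X$ does not spoil the relevant integrations by parts.

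Granting these estimates, the flow exists for all time; and since $\td J_{\oo,\chi_0}(\varphi_t)$ is non-increasing while the normalized trajectory $\{\varphi_t\}$ is precompact in $C^\infty$, the functional is bounded below along the flow, so $\int_0^{\infty}\|\dot\varphi_t\|_{L^2}^2\,dt<\infty$. Differentiating the flow equation and applying the maximum principle to $\dot\varphi_t$ then upgrades this to $\dot\varphi_t\to\mathrm{const}$ in $C^\infty$ as $t\to\infty$, and after subtracting the constant the limit $\varphi_\infty\in\cH_X(M,\chi_0)$ is a smooth solution of (\ref{eq:critical}).

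Uniqueness follows formally from convexity: if $\varphi_0,\varphi_1\in\cH_X(M,\chi_0)$ both solve (\ref{eq:critical}) they are critical points, hence by Corollary \ref{lem:4.2} minimizers, of $\td J_{\oo,\chi_0}$; joining them by a $C^{1,1}$ geodesic in $\cH(M,\chi_0)$, along which $\Re\td J_{\oo,\chi_0}$ is real-valued and strictly convex by Proposition \ref{prop:convexity}, and using that $\td J_{\oo,\chi_0}$ is unchanged by adding a constant, forces $\chi_{\varphi_0}=\chi_{\varphi_1}$.
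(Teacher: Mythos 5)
Your proposal is correct and follows essentially the same strategy as the paper: run the modified $J$-flow, show it preserves $\cH_X(M,\chi_0)$, establish Song--Weinkove-type a priori estimates (the exponential bound on $\tr_{\oo}\chi_\varphi$ via the maximum principle applied to $\log\tr_{\oo}\chi_\varphi-A\varphi$, with the cone condition (\ref{eq:A002}) absorbing the new terms coming from $X$ and $\te_X$, exactly as you predict), deduce long-time existence and smooth precompactness, and prove uniqueness via convexity of $\td J$ along $C^{1,1}$ geodesics. The one place where you genuinely diverge is the convergence step: the paper follows Chen and introduces the Calabi-type energy $E_{X,\chi_0}(\varphi)=\int_M\sigma^2\,\chi_\varphi^n/n!$ with $\sigma=\te_X(\chi_\varphi)-\La_{\chi_\varphi}\oo$, computes $\frac{d}{dt}E=-\frac{2}{n}\int_M|\nabla^{\chi}\sigma|^2_{\oo}\,\chi_\varphi^n/n!$, and extracts a subsequence along which $\nabla\sigma\to 0$, so that $\sigma_\infty$ is constant; you instead use the gradient-flow structure of $\td J$ itself (monotonicity plus boundedness below on the precompact trajectory gives $\int_0^\infty\|\dot\varphi_t\|_{L^2}^2\,dt<\infty$, and the maximum principle applied to $\dot\varphi_t$ upgrades this to $\dot\varphi_t\to 0$). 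Both routes work; yours is the more standard gradient-flow argument, while the paper's avoids discussing lower bounds of $\td J$ along the flow. Two small caveats: in Song--Weinkove (and in this paper) the $C^0$ estimate is \emph{derived from} the exponential second-order estimate by a Moser-type iteration not using the equation, rather than obtained independently from the subsolution first, so your ordering of the estimates should be reversed; and your equivariance argument for the preservation of $\cH_X$ implicitly needs $(\Im X)(\te_X(\chi_0))=0$, which the paper verifies explicitly via $X(\te_X(\chi_0))=|X|^2_{\chi_0}$.
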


Note that (\ref{eq:A002}) automatically implies $\te_X(\chi')$ is real valued. The uniqueness part of Theorem \ref{theo:001} follows directly from Proposition \ref{prop:convexity}. We only need to study the existence problem. Without loss of generality, we may assume that the initial metric $\chi_0$ satisfies (\ref{eq:A002}).
To prove Theorem \ref{theo:001}, we introduce the following flow, called ``modified J-flow":
\beqn
\pd {\varphi}t&=&c-\frac {\oo\wedge \chi_{\varphi}^{n-1}}{\chi_{\varphi}^n}+\frac 1n \Re\ \te_X(\chi_\varphi) \nonumber\\
&=&\frac 1n\Big(nc+\Re\ \te_X(\chi_\varphi)-\La_{\chi_\varphi}\oo\Big).
\label{eq:A001}
\eeqn
Denote the right hand side operator by $L(\varphi)$, then it is easy to see that the linearization of $L$ is given by $\td \Delta+{1\over n}\Re\ X$, where
$$\td \Delta f=\frac 1n h^{k\bar l}\p_k\p_{\bar l}f,\quad h^{k\bar l}=\chi^{k\bar j}
\chi^{i\bar l}g_{i\bar j}.$$
Since $\td\Delta$ is strictly elliptic, we always have short time solution to the flow equation (\ref{eq:A001}).

A modified $J$-flow starts with an element of $\cH_X(M,\chi_0)$ will remain in this space:
\begin{lem}
If $\varphi_0=\varphi|_{t=0}$ satisfies $(\Im\ X) (\varphi_0)=0$,  and $L_{\Im\ X} \oo=0$,  then along the modified $J$-flow, we always have $(\Im\ X) ( \varphi)=0$.
\end{lem}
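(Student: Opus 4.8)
The plan is to show that $u := (\Im X)(\varphi)$ satisfies a linear parabolic equation with zero initial data and hence vanishes for all time. First I would compute the evolution of $u$ along the flow. Applying $\Im X$ to the flow equation \eqref{eq:A001} and using that $\Im X$ commutes with $\partial_t$, we get
\[
\pd{u}{t} = (\Im X)\!\left(\frac 1n\Big(nc+\Re\ \te_X(\chi_\varphi)-\La_{\chi_\varphi}\oo\Big)\right).
\]
The constant $nc$ drops out. For the remaining two terms I would use the linearization computed just above: the operator $\varphi\mapsto \tfrac1n(\Re\,\te_X(\chi_\varphi)-\La_{\chi_\varphi}\oo)$ has linearization $\td\Delta + \tfrac1n\Re\,X$ along the flow, but here we need to differentiate in the direction $\Im X$ rather than in a potential direction; the key point is that $\Im X$ is a holomorphic vector field and, since $L_{\Im X}\chi_0 = 0$ and $L_{\Im X}\oo = 0$, the flow of $\Im X$ acts on the whole setup by biholomorphisms preserving both $\chi_0$ and $\oo$.

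This suggests the cleaner route: let $F_s$ denote the one-parameter group of biholomorphisms generated by $\Im X$. Since $F_s^*\chi_0 = \chi_0$ and $F_s^*\oo = \oo$, pulling back the flow \eqref{eq:A001} by $F_s$ shows that $\varphi_t\circ F_s$ solves the same flow equation with initial data $\varphi_0\circ F_s = \varphi_0$ (using $(\Im X)(\varphi_0)=0$). By uniqueness of solutions to the parabolic equation \eqref{eq:A001} (short-time existence and uniqueness hold since $\td\Delta$ is strictly elliptic), we conclude $\varphi_t\circ F_s = \varphi_t$ for all $s$ in the interval of existence, i.e. $\varphi_t$ is $\Im X$-invariant, which is exactly $(\Im X)(\varphi_t)=0$. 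One must check that $\te_X(\chi_\varphi)$ transforms correctly under $F_s$: since $\te_X(\chi_\varphi) = \te_X(\chi_0) + X(\varphi)$ and $F_s$ preserves $\chi_0$ while commuting with the holomorphic vector field $X$, we have $\te_X(\chi_0)\circ F_s = \te_X(\chi_0)$, so the term is handled consistently. (Alternatively, one verifies directly that $u$ solves $\partial_t u = \td\Delta u + \tfrac1n\Re\,X(u)$ with $u|_{t=0}=0$, then invokes the maximum principle; this is the infinitesimal version of the same argument.)

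The main obstacle is bookkeeping rather than substance: one must confirm that all the geometric quantities entering the flow — the Laplacian $\La_{\chi_\varphi}$, the potential $\te_X(\chi_\varphi)$, and the taking of $\Re$ — are natural under the biholomorphisms $F_s$, so that pulling back the flow by $F_s$ genuinely produces a solution of the same equation. The hypotheses $L_{\Im X}\chi_0 = 0$ and $L_{\Im X}\oo = 0$ are precisely what make $F_s$ a symmetry of the problem, and the fact that $\Im X$ generates a compact group guarantees $F_s$ is globally defined; together with the uniqueness of the parabolic flow this closes the argument. I would present the proof via the maximum-principle route for concreteness, since it keeps everything at the level of a single scalar PDE and avoids discussing the group action on $\cH(M,\chi_0)$ in detail.
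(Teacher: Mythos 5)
Your argument is correct, but it takes a genuinely different route from the paper's. The paper argues infinitesimally: writing $Y=\Im X$, it computes directly that $Y(\La_{\chi_\varphi}\oo)=-n\td\Delta\, Y(\varphi)$ (using $L_Y\chi_0=L_Y\oo=0$), shows $Y(\te_X(\chi_0))=0$ by noting that $X(\te_X(\chi_0))=|X|^2_{\chi_0}$ and $\te_X(\chi_0)$ are both real, and uses $[\Re X,\Im X]=0$ to obtain the linear parabolic equation $\partial_t Y(\varphi)=\td\Delta\, Y(\varphi)+\tfrac1n(\Re X)(Y(\varphi))$ with $Y(\varphi)|_{t=0}=0$, concluding by the maximum principle. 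Your main argument instead treats the flow $F_s$ of $\Im X$ as a symmetry of the equation and invokes uniqueness of the parabolic flow to get $\varphi_t\circ F_s=\varphi_t$; this is valid and arguably cleaner, trading the tensor computation for two things you should make explicit: (i) uniqueness of solutions to the fully nonlinear (but strictly parabolic) flow, which is standard, and (ii) the invariance $\te_X(\chi_0)\circ F_s=\te_X(\chi_0)$, which needs one more line than you give, since the relation $L_X\chi_0=\pbp\,\te_X(\chi_0)$ determines $\te_X(\chi_0)$ only up to an additive constant; the constant is killed by the normalization $\int_M\te_X(\chi_0)\,\chi_0^n=0$ together with $F_s^*\chi_0=\chi_0$ (or, as in the paper, by checking $(\Im X)\te_X(\chi_0)=\Im\big(|X|^2_{\chi_0}\big)=0$ directly). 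One more caveat: if, as you say at the end, you actually present the maximum-principle (infinitesimal) version, you cannot simply quote the linearization $\td\Delta+\tfrac1n\Re X$, because $\Im X$ is not a variation through potentials until the invariance is known; you must either first run your pullback argument and differentiate the family $\varphi_t\circ F_s$ at $s=0$, or reproduce the paper's computation of $Y(\La_{\chi_\varphi}\oo)$ and the claim $Y(\te_X(\chi_0))=0$. (Also, completeness of $F_s$ follows already from compactness of $M$; compactness of the group generated by $\Im X$ is not needed for this particular step.)
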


\begin{proof}
Denote $\Im\ X$ by $Y$. By the assumption $L_Y\oo=0=L_Y\chi_0$, we have
\beqs
Y (\La_{\chi_\varphi}\oo)&=&-\chi^{\al\bar j}_{\varphi}\chi^{i\bar \bb}_{\varphi}(L_Y\chi_{\varphi})_{\al\bar \bb}g_{i\bar j}+ \chi_{\varphi}^{i\bar j}(L_Y\oo)_{i\bar j}\\
&=&-\chi^{\al\bar j}_{\varphi}\chi^{i\bar \bb}_{\varphi}\big(L_Y\chi_0+Y(\varphi)\big)_{\al\bar \bb}g_{i\bar j}\\
&=&-\chi^{\al\bar j}_{\varphi}\chi^{i\bar \bb}_{\varphi}(Y(\varphi))_{\al\bar \bb}g_{i\bar j}=-n\td\Delta Y(\varphi)
\eeqs
 Form (\ref{eq:A001}), we have
\beqs
\frac{\partial Y(\varphi)}{ \partial t}&=&-\frac 1 n Y(\La_{\chi_\varphi}\oo)+\frac 1 n Y\big(\te_X(\chi_0)+Re\ X(\varphi)\big) \\
&=& \td\Delta Y(\varphi)+ \frac 1 n (Re\ X) \big(Y(\varphi) \big)+\frac 1 n Y\big(\te_X(\chi_0)\big),
\eeqs
where the last equality follows from the fact that the real part and imaginary part of a holomorphic vector field always commute.\footnote{Note that a holomorphic vector field is always of the form $Z-iJZ$, where $Z$ is real-holomorphic, i.e. $L_Z J=0$. So we have $[Z, JZ]=L_Z(JZ)=JL_Z Z=0$.}

{\bf Claim:} We always have $Y\big(\te_X(\chi_0)\big)=0,$   thus $Y(\varphi)$ satisfies a very good parabolic equation and we conclude from maximum principle that $Y(\varphi)=0$ along the flow.

To prove the claim, just note that from the definition of $\te_X(\chi_0)$, we always have
$$X^i\chi_{i\bar j}=\partial_{\bar j}\te_X(\chi_0).$$
So we have
$$X\big(\te_X(\chi_0)\big)=X^j\bar {X^i}\chi_{j\bar i}=|X|^2_{\chi_0}. $$
Since both $X\big(\te_X(\chi_0)\big)$ and $\te_X(\chi_0)$ are real, we have $(\Im\ X) \big(\te_X(\chi_0)\big)=\Im\big( X\big(\te_X(\chi_0)\big)\big)= 0$.
\end{proof}

From the above lemma, we see that actually we can rewrite our equation as
\beqn
\pd {\varphi}t&=&c-\frac {\oo\wedge \chi_{\varphi}^{n-1}}{\chi_{\varphi}^n}+\frac 1n  \te_X(\chi_\varphi) \nonumber\\
&=&\frac 1n\Big(nc+  \te_X(\chi_\varphi)-\La_{\chi_\varphi}\oo\Big).
\label{eq:A001'}
\eeqn
Differentiating (\ref{eq:A001'}) with respect to $t$, we have
$$\pd {}{t}\pd {\varphi}t=\td \Delta \pd {\varphi}t+{1\over n}X\Big(\pd {\varphi}t\Big).$$
The maximum principle implies that
$$\min_M\;\pd {\varphi}t\Big|_{t=0}\leq \pd {\varphi}t\leq \max_M\;\pd {\varphi}t\Big|_{t=0}.$$
In particular,
$$\La_{\chi}\oo\leq \max_M \La_{\chi_0}\oo+\max_M \te_X(\chi_\varphi)-\min_M\te_X(\chi_0).$$

Since both $\chi_0$ and $\chi_\varphi$ are $\mathrm{\Im} X$-invariant
 by Zhou-Zhu \cite{[ZhZh]}, the term $\max_M
\te_X(\chi_\varphi)-\min_M\te_X(\chi_0)$ is uniformly bounded. Thus,
$\La_{\chi}\oo$ has uniform positive upper bound along the flow. In
particular, there is a uniform constant $c>0$ such that \beq
\chi_{\varphi}\geq c\;\oo \eeq  as long as the flow exists.

\begin{lem}There is a uniform constant $C>0$ such that for any $(x, t)$ we have
$$\La_{\oo}\chi\leq C e^{A(\varphi-\inf_{M\times[0, t]}\varphi)},$$
and $|\varphi|_{C^0}\leq C$ as long as the flow exists.

\end{lem}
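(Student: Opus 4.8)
The plan is to establish a second-order a priori estimate for the modified $J$-flow, following the strategy of Song-Weinkove \cite{[SW]} for the usual $J$-flow, with the extra terms coming from $\te_X$ controlled by the $C^0$ bounds we already have. I would first set $u = \La_\oo\chi_\varphi$ and compute the evolution equation $(\p_t - \td\Delta - \tfrac1n X)\log u$. The key point is that the bad third-order terms arising from differentiating $\La_\oo\chi_\varphi$ twice cancel, exactly as in the $J$-flow case, provided one uses the endomorphism $h^{k\bar l}$ rather than the metric $\chi_\varphi$ in the Laplacian; the curvature of $\oo$ contributes a term bounded by $C\,\La_{\chi_\varphi}\oo \cdot u$, and since we have already shown $\La_{\chi_\varphi}\oo$ has a uniform upper bound (equivalently $\chi_\varphi \geq c\,\oo$), this is $\leq C u$. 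The contribution of the $\te_X$ term is harmless: $\tfrac1n X(\log u)$ is already accounted for in the operator, and $\te_X(\chi_\varphi) = \te_X(\chi_0) + X(\varphi)$ together with the bound on $\La_{\chi_0}\oo$ and the $\mathrm{\Im}X$-invariance (so $\te_X(\chi_\varphi)$ is real and, by Zhou-Zhu \cite{[ZhZh]}, $\max\te_X(\chi_\varphi) - \min\te_X(\chi_0)$ is uniformly bounded) gives that all zeroth-order terms are bounded by $C$.

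Next I would introduce the auxiliary function $w = \log u - A\varphi$ for a large constant $A$ to be chosen, and compute $(\p_t - \td\Delta - \tfrac1n X)w$. Using $\p_t\varphi = \tfrac1n(nc + \te_X(\chi_\varphi) - u)$ and $\td\Delta\varphi = \tfrac1n(\La_\oo\chi_0 \cdot(\text{something}))$—more precisely $n\,\td\Delta\varphi = \tr_{\chi_\varphi}\chi_0 \cdot(\cdots)$, the standard identity $\td\Delta\varphi = 1 - \tfrac1n\tr_{\chi_\varphi}\chi_0$—one finds that the $-A\varphi$ term produces $+\tfrac{A}{n}u$ from $-A\td\Delta\varphi$ (this is where $\chi_\varphi \geq c\,\oo$, hence $\tr_{\chi_\varphi}\chi_0 \geq c'\,u$, is used), which dominates the $+Cu$ coming from $\log u$ once $A$ is chosen large enough. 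At an interior maximum of $w$ on $M\times[0,t]$ (the initial time contributes a fixed constant), the parabolic maximum principle forces $u$ to be bounded there, hence $\log u \leq A\varphi + C \leq A(\varphi - \inf_{M\times[0,t]}\varphi) + C$ everywhere, which is the claimed inequality with the constant $A$ playing the role in the exponent.

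Finally, for the $C^0$ bound on $\varphi$: since $\p_t\varphi$ is uniformly bounded (shown earlier via the maximum principle applied to the time derivative), $\int_M \varphi\,\chi_0^n$ changes at a bounded rate, but more efficiently one uses the normalization or the fact that $\sup_M\varphi - \tfrac{1}{V}\int_M\varphi\,\chi_0^n$ and $\tfrac{1}{V}\int_M\varphi\,\chi_0^n - \inf_M\varphi$ are each controlled. The standard route is: the second-order estimate gives $\chi_\varphi \leq C e^{A(\varphi - \inf\varphi)}\,\oo$, so $\chi_\varphi^n \leq C e^{nA(\varphi-\inf\varphi)}\oo^n$; combined with $\int_M\chi_\varphi^n = \int_M\chi_0^n$ fixed, a Green's function / Moser iteration argument (or directly the argument in \cite{[SW]}) bounds the oscillation of $\varphi$, and then the bounded rate of change of $\int_M\varphi\,\chi_0^n$ (or an $L^1$ estimate) pins down $\varphi$ itself. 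I expect the main obstacle to be bookkeeping the $\te_X$ contributions carefully enough to see they are genuinely lower-order—that is, verifying that no new third-order term is introduced by the $\tfrac1n\te_X(\chi_\varphi)$ piece of the flow—but since $\te_X(\chi_\varphi)$ depends on $\varphi$ only through the first-order quantity $X(\varphi)$, differentiating it twice in space produces at worst $X(u)$-type terms already absorbed by the operator $\td\Delta + \tfrac1n X$, so this obstacle is manageable.
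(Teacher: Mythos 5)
There is a genuine gap at the heart of your maximum principle argument: the step where you claim that $-A\,\td\Delta\varphi$ produces a good term $+\tfrac{A}{n}u$ dominating the bad term $Cu$ (with $u=\La_\oo\chi_\varphi$). First, $\td\Delta$ is not the Laplacian of $\chi_\varphi$, so the identity $\td\Delta\varphi=1-\tfrac1n\mathrm{tr}_{\chi_\varphi}\chi_0$ you invoke is not available; with the correct operator one has $n\,\td\Delta\varphi=h^{k\bar l}\varphi_{k\bar l}=\La_{\chi_\varphi}\oo-h^{k\bar l}\chi_{0,k\bar l}$, which contains nothing comparable to $u$. Second, even granting your identity, the inequality $\mathrm{tr}_{\chi_\varphi}\chi_0\geq c'\,u$ is false as a pointwise statement: take $\chi_\varphi=t\oo$ with $t$ large and $\chi_0$ comparable to $\oo$; then $\mathrm{tr}_{\chi_\varphi}\chi_0\sim n/t\to 0$ while $u\sim nt\to\infty$, and this is perfectly consistent with the lower bound $\chi_\varphi\geq c\,\oo$. (Relatedly, the flow equation is $\p_t\varphi=\tfrac1n\big(nc+\te_X(\chi_\varphi)-\La_{\chi_\varphi}\oo\big)$, i.e.\ it involves the trace of $\oo$ with respect to $\chi_\varphi$, not $u$; your write-up conflates the two traces throughout this step.) So the mechanism by which you propose to close the maximum principle — an Aubin--Yau style domination $A\cdot(\text{good})\geq C u$ — simply does not exist for this flow.

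The tell-tale symptom is that your argument never uses the subsolution hypothesis (\ref{eq:A002})/(\ref{eq:A003}), which is exactly where the paper's (and Song--Weinkove's) proof lives. After choosing $A$ large (this is where $\chi_\varphi\geq c\,\oo$ is actually used, to make the curvature and $X$-terms small relative to $A$), the maximum principle at the maximum point of $\log\La_\oo\chi-A\varphi$ only yields
\begin{equation*}
nc+\te_X+h^{i\bar j}\chi_{0,i\bar j}-2\,\La_{\chi_\varphi}\oo\ \leq\ \ee ,
\end{equation*}
i.e.\ in suitable coordinates $nc+\te_X+\sum_i\mu_i/\la_i^2-2\sum_i\mu_i/\la_i\leq\ee$. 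This alone does not bound $\La_\oo\chi$: it only bounds the sum $\sum_i\mu_i/\la_i$ from below, while a single $\la_k$ could still blow up. The bound on each $\la_k/\mu_k$ is obtained by completing the square in the directions $i\neq k$ and then testing the positivity condition (\ref{eq:A003}) against $\beta_k=\sqrt{-1}\,dz^k\wedge d\bar z^k$, which gives $nc+\te_X-\sum_{i\neq k}\mu_i>2\ee$ and hence $\mu_k/\la_k>\ee/2$ for every $k$. Without this input the estimate is not expected to hold (this is precisely the class condition in \cite{[SW]}), so you must incorporate it. A smaller but real issue: the $\te_X$ contribution is not ``at worst $X(u)$-type terms already absorbed by the operator''; differentiating the flow produces the genuinely third-order term $g^{i\bar j}\p_i\p_{\bar j}\big(X(\varphi)\big)=\Delta_g\big(X(\varphi)\big)$ (Laplacian of $\oo$, not $\td\Delta$), and one must check the commutation identity $X(g^{i\bar j}\varphi_{i\bar j})=\Delta_g\big(X(\varphi)\big)-X^k_{,i}\varphi_{k\bar i}$ to see that it is cancelled by $\tfrac1n X(\log\La_\oo\chi)$; this verification is what justifies adding $\tfrac1n X$ to the operator. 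Your treatment of the third-order $\La_\oo\chi$ terms (via Weinkove's inequality) and your citation of \cite{[SW]}, \cite{[W1]}, \cite{[W2]} for the passage from the $C^2$ to the $C^0$ estimate are fine.
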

\begin{proof}Following Song-Weinkove \cite{[SW2]}, in normal coordinates of $\oo$, we have
\beq \td \Delta (\La_{\oo}\chi)=\frac 1n h^{k\bar l}R_{k\bar l}
^{\quad  i\bar j} (g)\chi_{i\bar j}+\frac 1n h^{k\bar l}g^{i\bar
j}\p_k\p_{\bar l}\chi_{i\bar j}, \eeq where $R_{k\bar l} ^{\quad
i\bar j} (g)$ denotes the curvature tensor of $g$. By the equation
of the modified $J$-flow, we have \beqs \pd
{}t\La_{\oo}\chi&=&-\frac 1n g^{i\bar j}\p_i\p_{\bar j}(\chi^{k\bar
l}g_{ k\bar l})+
\frac 1{n } g^{i\bar j}\p_i\p_{\bar j}(\te_X(\chi_\varphi))\\
&=&\frac 1n\Big(g^{i\bar j}h^{p\bar q}\p_i\p_{\bar j}\chi_{p\bar q}-g^{i\bar
j}h^{r\bar q}\chi^{p\bar s}\p_i \chi_{r\bar s}\p_{\bar
j}\chi_{p\bar q}-g^{i\bar j}h^{p\bar s}\chi^{r\bar q}\p_i
\chi_{r\bar s}\p_{\bar j}\chi_{p\bar q}+\chi^{k\bar l}R_{k\bar
l}(g)\Big)\\&&+
\frac 1{n } g^{i\bar j}\p_i\p_{\bar j}(\te_X(\chi_\varphi)). \eeqs
Therefore, we have
\beqs &&
(\td \Delta-\pd {}t)\log(\La_{\oo}\chi)\\&=&  \frac {\td \Delta
(\La_{\oo}\chi)}{\La_{\oo}\chi}
-\frac {|\td \Na(\La_{\oo}\chi) |^2}{(\La_{\oo}\chi)^2}-\pd {}t\log(\La_{\oo}\chi)\\
&= &\frac 1{n\La_{\oo}\chi}\Big(h^{k\bar l}R_{k\bar l} ^{\quad
i\bar j}(g)\chi_{i\bar j}+g^{i\bar j}h^{r\bar q}\chi^{p\bar
s}\p_i \chi_{r\bar s}\p_{\bar j}\chi_{p\bar q}+g^{i\bar j}h^{p\bar
s}\chi^{r\bar q}\p_i \chi_{r\bar s}\p_{\bar j}\chi_{p\bar
q}\\&&-\chi^{k\bar l}R_{k\bar l}(g)-n\frac {|\td
\Na(\La_{\oo}\chi)
|^2}{ \La_{\oo}\chi }-  g^{i\bar j}\p_i\p_{\bar j}\te_X(\chi_\varphi)\Big)\\
&\geq&\frac 1{n\La_{\oo}\chi}\Big(h^{k\bar l}R_{k\bar l} ^{\quad
i\bar j}(g)\chi_{i\bar j}-\chi^{k\bar l}R_{k\bar l}(g)-  g^{i\bar j}\p_i\p_{\bar j}\te_X(\chi_\varphi)\Big),
\eeqs  where we used the inequality by Lemma 3.2 in \cite{[W1]} \beq n|\td \Na(\La_{\oo}\chi)
|^2 \leq (\La_{\oo}\chi) g^{i\bar j}h^{r\bar q}\chi^{p\bar
s}\p_i \chi_{r\bar s}  \p_{\bar j}\chi_{p\bar q}. \eeq
On the other hand, we have
\beqs
X(\log \La_{\oo}\chi)&=&\frac 1{\La_{\oo}\chi}X\Big(g^{i\bar j}(\chi_{0, i\bar j}+\varphi_{i\bar j})\Big)\\
&=&\frac 1{\La_{\oo}\chi}\Big(X(g^{i\bar j}\chi_{0, i\bar j})+X(g^{i\bar j}\varphi_{i\bar j})\Big)\\
&=& \frac 1{\La_{\oo}\chi}\Big(X(g^{i\bar j}\chi_{0, i\bar j})+\Delta_g(X(\varphi))- X^k_{, i}\varphi_{k\bar i} \Big),
\eeqs where we used the fact that
\beqs
X(g^{i\bar j}\varphi_{i\bar j}) = X^k\varphi_{i\bar ik}=X^k \varphi_{k\bar ii}=g^{i\bar j}(X(\varphi))_{i\bar j}-X^k_{, i}\varphi_{k\bar i}.
\eeqs
Combining the above identities, we have
\beqs &&
\Big(\td \Delta+\frac 1{n } X-\pd {}t\Big)\log(\La_{\oo}\chi)\\&\geq &\frac 1{n\La_{\oo}\chi}\Big(h^{k\bar l}R_{k\bar l} ^{\quad
i\bar j}(g)\chi_{i\bar j}-\chi^{k\bar l}R_{k\bar l}(g)-  g^{i\bar j}\p_i\p_{\bar j}\te_X(\varphi)+  X(g^{i\bar j}\chi_{0, i\bar j})+  \Delta_g(X(\varphi))-
  X^k_{, i}\varphi_{k\bar i}\Big)\\
&=&\frac 1{n\La_{\oo}\chi}\Big(h^{k\bar l}R_{k\bar l}^{\quad i\bar j}(g)\chi_{i\bar j}-\chi^{k\bar l}R_{k\bar l}(g) - X^k_{, i}\varphi_{k\bar i} - \Delta_g\te_X+ X(g^{i\bar j}\chi_{0, i\bar j})\Big)\\
&=& \frac 1{n\La_{\oo}\chi}\Big(h^{k\bar l}R_{k\bar l}^{\quad i\bar j}(g)\chi_{i\bar j}-\chi^{k\bar l}R_{k\bar l}(g)-   X^k_{, i}\chi_{k\bar i}+  X^k_{, i}\chi_{0, k\bar i} - \Delta_g\te_X+  X(g^{i\bar j}\chi_{0, i\bar j})\Big),
\eeqs where $\te_X$ is the holomorphic potential of $X$ with respect to $\chi_0.$
Note that
\beqs
\Big(\td \Delta+\frac 1{n } X-\pd {}t\Big)\varphi&=&\frac 1n\Big(h^{k\bar l}\varphi_{k\bar l}+\chi^{i\bar j}g_{i\bar j}-nc-\te_X\Big)\\
&=& \frac 1n\Big(2\chi^{i\bar j}g_{i\bar j}-h^{i\bar j}\chi_{0, i\bar j}-nc-\te_X\Big).
\eeqs
Thus, we have
\beqs
&&
n\Big(\td \Delta+\frac 1n X-\pd {}t\Big)\Big(\log(\La_{\oo}\chi)-A\varphi\Big)\\
&\geq& \frac 1{\La_{\oo}\chi}\Big(h^{k\bar l}R_{k\bar l}^{\quad i\bar j}(g)\chi_{i\bar j}-\chi^{k\bar l}R_{k\bar l}(g)- X^k_{, i}\chi_{k\bar i}+C(\chi_0, \oo, X)\Big)\\
&&-2A\chi^{i\bar j}g_{i\bar j}+Ah^{i\bar j}\chi_{0, i\bar j}+ncA
+A\te_X.
\eeqs By the assumption (\ref{eq:A002}), we can choose $\ee>0$ sufficiently small such that
\beq
(nc\chi_0-(n-1)\oo)\wedge \chi_0^{n-2}+\te_X(\chi_0)\,\chi_0^{n-1}>2\ee\,\chi_0^{n-1}.  \label{eq:A003}
\eeq
Moreover, since $\chi_\varphi$ is uniformly bounded from below, we can choose   $A$ large  such that
$$-\frac 1{A\La_{\oo}\chi}\Big(h^{k\bar l}R_{k\bar l}^{\quad i\bar j}(g)\chi_{i\bar j}-\chi^{k\bar l}R_{k\bar l}(g)- X^k_{, i}\chi_{k\bar i}+C(\oo, X)\Big)\leq \ee, $$
then at the maximum point $(x_0, t_0)$ of $\log(\La_{\oo}\chi)-A\varphi$, we have
\beqs
nc+\te_X+h^{i\bar j}\chi_{0, i\bar j}-2\chi^{i\bar j}g_{i\bar j}\leq
\ee.
\eeqs
We choose normal
coordinates for the metric $ \chi_{0}$ so that the metric $\chi $
is diagonal with entries $\la_1, \cdots, \la_n.$ We denote the
diagonal entries of $\oo$ by $\mu_1, \cdots, \mu_n. $ Thus, we have
$$nc+\te_X(x_0)+ \sum_{i=1}^n\frac {\mu_i}{\la_i^2}-2\sum_{i=1}^n\frac {\mu_i}{\la_i}\leq \ee, $$
which implies that for any fixed index $k$, we have the inequality \beqn \ee&\geq&\sum_{i=1, i\neq k}^n\,
\mu_i\Big(\frac 1{\la_i}-1\Big)^2-\sum_{i=1, i\neq k}^n\,
\mu_i +\frac {\mu_k}{\la_k^2}-2\frac {\mu_k}{\la_k}+nc+\te_X(x_0)\nonumber\\
&\geq & nc+\te_X(x_0)-\sum_{i=1, i\neq k}^n\, \mu_i-2\frac
{\mu_k}{\la_k}.\label{eq:mu} \eeqn

On the other hand, by (\ref{eq:A003}) we have for any $k$,
$$(nc\chi_0-(n-1)\oo)\wedge \chi_0^{n-2}\wedge\beta_k+\te_X(\chi_0)\,\chi_0^{n-1}\wedge\beta_k>2\ee\,\chi_0^{n-1}\wedge \beta_k,$$
where $\beta_k:=\sqrt{-1}dz^k\wedge d\bar z^k$. This means
$$nc+\te_X(x_0)-\sum_{i=1, i\neq k}^n\,\mu_i>2\ee. $$
Combining the above inequalities, we have
$\frac {\la_k}{\mu_k}<\frac 2{\ee}$ and there is a constant $C=C(n, \ee)$ such that  at the  point $(x_0, t_0)$,
$$\La_{\oo}\chi\leq C.$$
Thus, at any point $(x_0, t_0)$ we have the estimate
$$\La_{\oo}\chi\leq C e^{A(\varphi-\inf_{M\times[0, t]}\varphi)}.$$

The passage from this $C^2$ estimate to $C^0$ estimate does not use the equation and hence is identical to Song-Weinkove\cite{[SW]} and Weinkove \cite{[W1]}\cite{[W2]}, so we omit it.

\end{proof}

\begin{cor}
The modified J-flow exists for any $t\in [0,\infty)$.
\end{cor}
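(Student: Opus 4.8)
The goal is to upgrade the a priori estimates just established into a long-time existence statement for the modified $J$-flow. The plan is the standard continuity-method argument for parabolic Monge--Amp\`ere type equations, combined with the fact that the flow preserves the subsolution condition along the way.

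First I would recall that short-time existence holds: the linearization of the operator $L(\varphi)$ is $\td\Delta + \frac1n\Re\ X$, which is strictly elliptic since $\td\Delta = \frac1n h^{k\bar l}\p_k\p_{\bar l}$ is, so for any smooth initial datum $\varphi_0\in\cH_X(M,\chi_0)$ there is a maximal time $T\in(0,\infty]$ on which a smooth solution $\varphi(\cdot,t)$ exists. Suppose for contradiction that $T<\infty$. On $[0,T)$ we have already proved: (i) the flow stays in $\cH_X(M,\chi_0)$ (the $\Im X$-invariance lemma), so $\te_X(\chi_\varphi)$ is real and the uniform two-sided bound on $\pd{\varphi}t$ from the maximum principle applies; (ii) consequently $\La_{\chi}\oo$ is bounded above, so $\chi_\varphi\geq c\,\oo$ for a uniform $c>0$; (iii) the uniform bound $|\varphi|_{C^0}\leq C$, which in turn makes the exponential $C^2$ estimate $\La_{\oo}\chi\leq Ce^{A(\varphi-\inf\varphi)}$ into a genuine uniform bound $\La_{\oo}\chi\leq C$.

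The next step is to bootstrap. From $\chi_\varphi\geq c\,\oo$ and $\La_{\oo}\chi_\varphi\leq C$ we get that $\chi_\varphi$ is uniformly equivalent to $\chi_0$ (equivalently, the eigenvalues $\la_i$ are pinched between two positive constants), so the equation $\pd{\varphi}t = \frac1n(nc+\te_X(\chi_\varphi)-\La_{\chi_\varphi}\oo)$ is uniformly parabolic with $C^0$-bounded, $C^0$-controlled coefficients. Standard parabolic theory then yields higher-order estimates: a Calabi-type third-order estimate (or the Evans--Krylov theorem for the parabolic complex Monge--Amp\`ere setting) gives a uniform $C^{2,\alpha}$ bound on $\varphi$ on $M\times[0,T)$, and then Schauder bootstrapping gives uniform $C^{k,\alpha}$ bounds for all $k$, with constants independent of $t<T$. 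Hence $\varphi(\cdot,t)$ converges in $C^\infty$ as $t\uparrow T$ to a smooth limit $\varphi_T$ with $\chi_{\varphi_T}>0$, i.e. $\varphi_T\in\cH_X(M,\chi_0)$. Restarting the short-time existence result from $\varphi_T$ extends the flow past $T$, contradicting maximality. Therefore $T=\infty$.

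The main obstacle is the derivation of the uniform higher-order estimates from the second-order bound — specifically, having the parabolic Evans--Krylov/Calabi $C^3$ argument go through in the presence of the extra lower-order term $\frac1n\te_X(\chi_\varphi)$. This term is harmless: it depends only on $\varphi$ and its first derivatives (recall $\te_X(\chi_\varphi)=\te_X+X(\varphi)$), both of which are already under control, so it contributes only to the $C^0$- and $C^1$-controlled part of the equation and does not affect the structural concavity/convexity used in Evans--Krylov, nor the Calabi identity used for the third-order estimate. These arguments are by now routine for the $J$-flow (see Song--Weinkove \cite{[SW]}, Weinkove \cite{[W1]}\cite{[W2]}, and Song--Weinkove \cite{[SW2]}), and the modifications needed here are exactly parallel to those already carried out for the second-order estimate above, so I would simply invoke them. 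Thus the corollary follows.
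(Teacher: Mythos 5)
Your proposal is correct and follows essentially the same route as the paper: assume a finite maximal time $T$, invoke the uniform $C^0$ and $C^2$ estimates from the preceding lemma together with the uniform lower bound on $\chi_\varphi$, upgrade to $C^{2,\alpha}$ via Evans--Krylov, pass to a smooth limit at $T$, and restart the flow to contradict maximality. The extra remarks on the harmlessness of the $\te_X(\chi_\varphi)$ term and the Schauder bootstrap are consistent with, and slightly more detailed than, the paper's argument.
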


\begin{proof}
Suppose the solution exists only in $[0,T)$ with $T<\infty$. We will derive a contradiction.
By the above lemma, we have uniform $C^0$ and $C^2$ estimates. By interpolation, we also have uniform $C^1$ estimate on $[0,T)$. By Evans-Krylov estimate, we also have uniform $C^{2,\alpha}$ estimate. Then we can take limit of $\varphi(\cdot, t_i)$ as $t_i\to T$ to get a $\varphi_T$. Since $\chi_\varphi$ is uniformly bounded from below, $\chi_0+\pbp\varphi_T$ is a K\"ahler form. So the solution can extend beyond $T$, a contradiction!
\end{proof}

Now we can use the modified $J$-flow to finish the proof of Theorem
\ref{theo:001}:

\begin{proof}[Proof of Theorem \ref{theo:001}]
By our above discussion, the modified $J$-flow has a unique solution
$\varphi(\cdot,t)$ for $t\in [0,\infty)$. By the proof of the above
theorem, we also have a uniform $C^{2,\alpha}$ estimate.  By the
standard bootstrap argument, the solutions are uniformly bounded
with respect to any $C^k$ norm. Then for any sequence $t_i\to
\infty$, we can find a subsequence, also denoted by $t_i$  such that
$\varphi(\cdot, t_i)\to \varphi_\infty$ in $C^{\infty}$.  We shall
prove that $\varphi_\infty$ solves (\ref{eq:critical}).

To show this, we define an energy functional associated with the
modified $J$-functional following Chen \cite{[Chen2]}:
$$E_{X,\chi_0}(\varphi):= \int_M \big(\te_X(\chi_\varphi)-\La_{\chi_\varphi}\oo\big)^2 {\chi_\varphi^n\over n!}=\int_M \sigma^2 {\chi_\varphi^n\over n!}  ,$$
where $\sigma:= \te_X(\chi_\varphi)-\La_{\chi_\varphi}\oo$.
Then along the modified J-flow, we have (in normal coordinates of $\chi_\varphi$)
\beqs
{d\over dt} E_{X,\chi_0}(\varphi) &=& \int_M\Big[ 2\sigma \big(   X({\partial\varphi \over \partial t}) +\chi_\varphi^{i\bar q}\chi_\varphi^{p\bar j}({\partial\varphi \over \partial t})_{,p\bar q} g_{i\bar j}\big) +\sigma^2 \Delta_\chi {\partial\varphi \over \partial t} \Big]{\chi_\varphi^n\over n!} \\
&=& \int_M\Big[ {2\over n}\sigma X(\sigma)+{2\over n}\sigma \chi_\varphi^{i\bar q}\chi_\varphi^{p\bar j}\sigma_{,p\bar q} g_{i\bar j}+{1\over n}\sigma^2 \chi_\varphi^{p\bar q}\sigma_{,p\bar q}\Big]{\chi_\varphi^n\over n!} \\
&=&  \int_M\Big[ {2\over n}\sigma X(\sigma)-{2\over n} \chi_\varphi^{i\bar q}\chi_\varphi^{p\bar j}\sigma_{,\bar q}\sigma_{,p} g_{i\bar j}-{2\over n} \sigma\chi_\varphi^{i\bar q}\chi_\varphi^{p\bar j}\sigma_{,p} g_{i\bar j,\bar q}-{2\over n}\sigma \chi_\varphi^{p\bar q}\sigma_{,\bar q}\sigma_{,p}\Big]{\chi_\varphi^n\over n!} \\
&=& \int_M\Big[ {2\over n}\sigma X(\sigma)-{2\over n} \chi_\varphi^{i\bar q}\chi_\varphi^{p\bar j}\sigma_{,\bar q}\sigma_{,p} g_{i\bar j}-{2\over n} \sigma\chi_\varphi^{i\bar q}\chi_\varphi^{p\bar j}\sigma_{,p} g_{i\bar q,\bar j}-{2\over n}\sigma \chi_\varphi^{p\bar q}\sigma_{,\bar q}\sigma_{,p}\Big]{\chi_\varphi^n\over n!} \\
&=& \int_M\Big[ {2\over n}\sigma X(\sigma)-{2\over n} \chi_\varphi^{i\bar q}\chi_\varphi^{p\bar j}\sigma_{,\bar q}\sigma_{,p} g_{i\bar j}-{2\over n} \sigma\chi_\varphi^{p\bar j}\sigma_{,p}(\La_{\chi_\varphi}\oo)_{,\bar j} -{2\over n}\sigma \chi_\varphi^{p\bar q}\sigma_{,\bar q}\sigma_{,p}\Big]{\chi_\varphi^n\over n!} \\
&=& \int_M\Big[ {2\over n}\sigma X(\sigma)-{2\over n} \chi_\varphi^{i\bar q}\chi_\varphi^{p\bar j}\sigma_{,\bar q}\sigma_{,p} g_{i\bar j}-{2\over n}\sigma \chi_\varphi^{p\bar q}\big(\te_X(\chi_\varphi)\big)_{,\bar q}\sigma_{,p}\Big]{\chi_\varphi^n\over n!} \\
&=& -{2\over n} \int_M \chi_\varphi^{i\bar q}\chi_\varphi^{p\bar j}\sigma_{,\bar q}\sigma_{,p} g_{i\bar j}{\chi_\varphi^n\over n!}.
\eeqs
The last equality comes from the definition of $\te_X(\chi_\varphi)$. So we have
$${d\over dt} E_{X,\chi_0}(\varphi)=-{2\over n} \int_M |\nabla^\chi \sigma|^2_\oo {\chi_\varphi^n\over n!}<0.$$
In particular, this implies
$$\int_0^\infty \Big( \int_M |\nabla^{\chi_t} \sigma(\cdot, t)|^2_\oo {\chi_t^n\over n!} \Big)dt<\infty.$$
So if the sequence $t_i$ is chosen properly, so that
$$\int_M |\nabla^{\chi_{t_i}} \sigma(\cdot, t_i)|^2_\oo {\chi_{t_i}^n\over n!}\to 0 ,$$
then we can conclude that
$$\te_X(\chi_{\varphi_\infty})-\La_{\chi_{\varphi_\infty}}\oo\equiv const.$$
This implies that $\varphi_\infty$ solves (\ref{eq:critical}). The
theorem is proved.
\end{proof}

\subsection{Proof of Theorem \ref{theo:Amain1}}

\begin{proof}[Proof of Theorem \ref{theo:Amain1}]
We focus on the $G=\{1\}$ case, the proof in the general case is
identical. Recall the Aubin-Yau functionals \beqs
I_{\chi_0}(\varphi)&=&\int_X\;\varphi(\frac {\chi_0^n}{n!}-\frac {\chi_{\varphi}^n}{n!}),\\
J_{\chi_0}(\varphi)&=&\int_0^1dt\int_X\;\pd
{\varphi_t}t(\frac {\chi_0^n}{n!}-\frac {\chi_{\varphi}^n}{n!}).
\eeqs Direct calculation shows that
\beqs
I_{\chi_0}(\varphi)-J_{\chi_0}(\varphi)&=&-\int_0^1\,dt\int_X\;\pd
{\varphi}t \Delta_{\chi_{\varphi}}\varphi\,\frac {\chi_{\varphi}^n}{n!}\\
&=&-\int_0^1\,\int_X\;\pd {\varphi}t (\chi_{\varphi}^n-\chi_0\wedge
\chi_{\varphi}^{n-1})\frac {dt}{(n-1)!}. \eeqs

As $\hat J$, we also have the following lemma for $\td J$, whose proof is the same as in \cite{[Sz]} and \cite{[LSY]}:

\begin{lem}\label{lem:4.1}
If $\td J_{\oo,\chi}$ is bounded from below, then so is $\td
J_{\oo',\chi}$ for any $\oo'\in[\oo]$. ($\oo'$ may not be positive.)
\end{lem}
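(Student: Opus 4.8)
The plan is to mimic the standard "cocycle" argument for the usual functional $\hat J$, keeping track of the extra term involving $\te_X$. Given $\oo' \in [\oo]$, write $\oo' = \oo + \pbp f$ for some smooth real function $f$ with $\Im X(f) = 0$ (possible since $[\oo]$ is $\Im X$-invariant; if necessary average $f$ over the compact torus generated by $\Im X$). First I would record, straight from the defining variational formula \eqref{eq:003}, that the difference $\td J_{\oo',\chi_0}(\varphi) - \td J_{\oo,\chi_0}(\varphi)$ equals
\[
\int_0^1\!\int_M \pd{\varphi_t}{t}\,\big(\pbp f \wedge \chi_{\varphi_t}^{n-1}\big)\frac{dt}{(n-1)!} \;-\; \big(c' - c\big)\int_0^1\!\int_M \pd{\varphi_t}{t}\,\chi_{\varphi_t}^n \frac{dt}{(n-1)!},
\]
where $c' = [\oo'][\chi_0]^{n-1}/[\chi_0]^n = c$ since $[\oo'] = [\oo]$; so the second term drops out entirely, and only the first integral remains. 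The key point is that this first integral is a genuine cocycle that depends only on the endpoints: integrating by parts in the $M$-variable it becomes $\int_0^1\!\int_M f\, \pbp\pd{\varphi_t}{t}\wedge \chi_{\varphi_t}^{n-1}/(n-1)! = \frac{1}{n!}\int_0^1 \frac{d}{dt}\!\int_M f\,\chi_{\varphi_t}^n\,dt = \frac{1}{n!}\int_M f\,(\chi_{\varphi}^n - \chi_0^n)$, which is a fixed continuous expression in $\varphi$ alone.

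Next I would bound this endpoint expression uniformly. Since $|f| \le \sup_M|f|$ and both $\chi_\varphi^n$ and $\chi_0^n$ have the same total mass $[\chi_0]^n$, we get $\big|\td J_{\oo',\chi_0}(\varphi) - \td J_{\oo,\chi_0}(\varphi)\big| \le \frac{1}{n!}\sup_M|f|\cdot \big(\int_M \chi_\varphi^n + \int_M \chi_0^n\big) = \frac{2}{n!}\sup_M|f|\cdot [\chi_0]^n =: C$, a constant independent of $\varphi$. Hence if $\td J_{\oo,\chi_0} \ge -C_0$ on $\cH_X(M,\chi_0)$ then $\td J_{\oo',\chi_0} \ge -C_0 - C$ there, which is the claim. (This is exactly the argument of \cite{[Sz]} and \cite{[LSY]} for $\hat J$, and the only new feature — the $\te_X$ term — actually contributes nothing to the difference because $\hat J_{\oo',\chi_0} - \hat J_{\oo,\chi_0} = \td J_{\oo',\chi_0} - \td J_{\oo,\chi_0}$ by \eqref{eq:003}, the $\te_X$ summand being identical for both choices of reference form.)

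The main thing to be careful about — the only potential obstacle — is the well-definedness of $\td J$ along a path and the path-independence of the difference: one must check that the integrand above is a closed $1$-form on $\cH_X(M,\chi_0)$, i.e. that $\int_0^1\!\int_M \pd{\varphi_t}{t}\,\pbp f \wedge \chi_{\varphi_t}^{n-1}$ genuinely reduces to the endpoint formula regardless of the chosen path. This is the standard computation $\frac{d}{ds}$ of the integral around a closed loop vanishes, which for the $\hat J$ part is classical (Chen), and for our term is precisely the elementary identity displayed above. Once that is in place there is nothing further to do, so this lemma is routine given Proposition \ref{prop:convexity} and the formula \eqref{eq:003}; I would present it in a few lines.
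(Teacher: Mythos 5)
Your argument is correct and is essentially the paper's own proof: the paper simply refers to the cocycle computation of \cite{[Sz]} and \cite{[LSY]}, which is exactly your endpoint identity $\hat J_{\oo',\chi_0}(\varphi)-\hat J_{\oo,\chi_0}(\varphi)=\frac{1}{n!}\int_M f\,(\chi_\varphi^n-\chi_0^n)$ together with the observation that the $\te_X$ term in \eqref{eq:003} is independent of the reference form and so cancels in the difference. Nothing further is needed (the invariance/averaging of $f$ you mention is not even required, since the uniform bound holds for any smooth $f$).
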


Set
$$c:=\frac {\big(\epsilon[\chi_0]- \pi c_1(M)\big)\cdot
[\chi_0]^{n-1}}{[\chi_0]^n}=-\frac { \pi c_1(M)\cdot
[\chi_0]^{n-1}}{[\chi_0]^n}+\epsilon.$$
By condition (2) of Theorem \ref{theo:Amain1}, we can find a $\oo\in \epsilon[\chi_0]- \pi c_1(M)$ with $\oo>0$ (Since $\min\theta_X<0$, $\epsilon[\chi_0]- \pi c_1(M)>0$. ). Since $\Im X$ generates a compact one-parameter group of holomorphic automorphisms, we can average $\oo$. So we can also assume that $L_{\Im X}\oo=0$. Then by condition (3), we can find a closed (1,1)-form $\chi'\in[\chi_0]$ such that
$$(nc+\min\theta_X)\chi'-(n-1)\oo>0.$$

We claim that $nc+\min\theta_X>0$, thus the above inequality also
implies $\chi'>0$. In fact, by condition (2), we have
$$\frac{ \pi c_1(M)\cdot [\chi_0]^{n-1}}{[\chi_0]^n}<\ee+\min\theta_X.$$
So, $c>-\min\theta_X$, and hence
$$nc+\min\theta_X>-(n-1)\min\theta_X>0.$$

Now we have
$$(nc+\theta_X(\chi'))\chi'-(n-1)\oo\geq (nc+\min\theta_X)\chi'-(n-1)\oo>0.$$
By Theorem \ref{theo:001}, we know that $\td J_{\oo,\chi_0}$ has
critical point. By Lemma \ref{lem:4.1} and \ref{lem:4.2}, we
conclude that (Remember that $\oo_0=-Ric(\chi_0)$)
$$\td J_{\oo_0+\ee\chi_0,\chi_0}\geq -C.$$
By (\ref{eqn:K-J}), we have \beqs \td\mu_{\chi_0}(\varphi) &=&
\int_M\;\log \frac {\chi_{\varphi}^n}{\chi_0^n}\,\frac
{\chi_{\varphi}^n}{n!}+\td J_{\oo_0,
\chi_0}(\varphi)\\
&=& \int_M\;\log \frac {\chi_{\varphi}^n}{\chi_0^n}\,\frac
{\chi_{\varphi}^n}{n!}+\td J_{\oo_0+\ee\chi_0,
\chi_0}(\varphi)-\ee(I_{\chi_0}-J_{\chi_0})(\varphi)\\
& \geq & \big( {n+1\over n}\alpha
-\ee\big)(I_{\chi_0}-J_{\chi_0})(\varphi)-C, \eeqs for any positive
$\alpha<\alpha_M([\chi_0])$.  By condition (1), we can choose such
an  $\alpha<\alpha_M([\chi_0])$ with $ {n+1\over n}\alpha -\ee>0$,
so the modified $K$-energy is proper.
\end{proof}

\end{document}